\pgfplotsset{compat=1.18}
\newcommand*{\BREAK}{\textbf{break}}
\DeclareMathOperator*{\argmax}{arg\,max}
\DeclareMathOperator*{\argmin}{arg\,min}
\DeclareMathOperator{\dom}{dom}
\DeclareMathOperator{\prob}{prob}
\DeclarePairedDelimiter{\norm}{\lVert}{\rVert}
\DeclareMathOperator{\sgr}{\partial}
\newcommand*{\tbar}{\bar{t}}
\newcommand*{\fbar}{\bar{f}}
\newcommand*{\R}{\mathbb{R}}
\newcommand*{\eps}{\varepsilon}
\newcommand*{\dram}{d_{ij}^{ram}}
\newcommand*{\dra}{d_{ij}^{ra}}
\newcommand*{\tbiga}{T_{ij}^a}
\newcommand*{\tbigm}{T_{ij}^{m}}
\newcommand*{\probram}{\prob_{ij}^{ram}}
\newcommand*{\tbigam}{T_{ij}^{am}}
\def\<#1,#2>{\langle #1,#2\rangle}
\newcommand{\cbraces}[1]{\left( #1 \right)}
\newcommand{\sbraces}[1]{\left[ #1 \right]}
\newtheorem{theorem}{Theorem}[section]
\newtheorem{lemma}[theorem]{Lemma}
\newcommand{\dijra}{{d^{ra}_{ij}}}
\newcommand{\numberthis}{\addtocounter{equation}{1}\tag{\theequation}}
\def\one{{\mathbf 1}}
\def\la{\langle}
\def\ra{\rangle}
\def\d{d}
\begin{document}
\title{Primal-Dual Gradient Methods for Searching Network Equilibria in Combined Models with Nested Choice Structure and Capacity Constraints}
\author[1, *]{Meruza~Kubentayeva}
\author[1]{Demyan~Yarmoshik}
\author[1, 2]{Mikhail~Persiianov}
\author[2,3]{Alexey~Kroshnin}
\author[1]{Ekaterina~Kotliarova} 
\author[1]{Nazarii~Tupitsa}
\author[1]{Dmitry~Pasechnyuk} 
\author[1, 2, 3]{Alexander~Gasnikov}
\author[1, 4]{Vladimir~Shvetsov}
\author[4]{Leonid~Baryshev}
\author[4]{Alexey~Shurupov}

\affil[1]{Moscow Institute of Physics and Technology}
\affil[2]{Institute for Information Transmission Problems RAS}
\affil[3]{Higher School of Economics}
\affil[4]{Russian University of Transport}
\affil[*]{Corresponding author: \href{mailto:kubentay@gmail.com}{kubentay@gmail.com}}

\date{}
\maketitle

\begin{abstract}

We consider a network equilibrium model (i.e.\ a combined model), which was proposed as an alternative to the classic four-step approach for travel forecasting in transportation networks. This model can be formulated as a convex minimization program. We extend the combined model to the case of the stable dynamics (SD) model in the traffic assignment stage, which imposes strict capacity constraints in the network. We propose a way to solve corresponding dual optimization problems with accelerated gradient methods and give theoretical guarantees of their convergence. We conducted numerical experiments with considered optimization methods on Moscow and Berlin networks.

\textbf{Keywords:} forecasting, combined model, trip distribution, traffic assignment, capacity constraints, gradient method


\end{abstract}

\section{Introduction}

One of the most popular approaches to travel forecasting in transportation networks is the four-step procedure \citep{ortuzar2011}: sequential run of trip generation, trip distribution, modal split, and traffic assignment stages. However, this approach has a number of limitations, e.g.\ there is no convergence guarantee \citep{oppenheim1995urban, boyce1994introducing, boyce2002sequential}.

To overcome this issue, there were proposed network equilibrium models (NE / combined models) which can be formulated as an optimization or, more generally, a variational inequality problem \citep{beckmann1956studies, de2005solving}. In particular, \cite{evans1976} reduced the problem of searching equilibrium in the case of one transport mode to a convex optimization problem, combining trip distribution and route assignment models. \cite{florian1978combined} made an extension to the multi-modal case, where destination and mode are chosen simultaneously with the same value of a calibration parameter. The first mathematical formulation of a network equilibrium model with hierarchical destination and mode choices was proposed by \cite{fernandez1994network}~--- the approach was presented for modelling nested choice structure of trips using several modes (e.g.\ park'n ride trips). \cite{abrahamsson1999formulation} formulated a nested combined model where mode choice is conditioned by destination choice and demonstrated its application for the Stockholm region. The recent works \cite{CHU2018105}, \cite{LIU201837}, and \cite{Gao2022} proposed the extensions of the combined models for the cases of modeling trip frequency, remote park-and-ride, and tourism demand, respectively.

Finding a solution in trip distribution and traffic assignment problems~--- whether they are considered separately in the four-step approach or combined into one network equilibrium problem~--- relies on numerical methods for convex optimization.
E.g., a classic choice for the traffic assignment problem (which is the most computationally expensive part) is the Frank--Wolfe algorithm \citep{frank1956algorithm}, and for the trip distribution problem it is the Sinkhorn algorithm \citep{sinkhorn1974diagonal}. A class of path-based algortihms can be an alternative to the link-based Frank--Wolfe algorithm for solving traffic assignment problem: \cite{CHEN2020102809}, \cite{igp_xie}, \cite{reduced_gp_florian2020}. 
A popular choice for solving an optimization problem in the above-mentioned combined models is a partial linearization algorithm of \cite{evans1976} and its modifications for multi-modal and multi-user cases \citep{abrahamsson1999formulation, boyce1983implementation}. 
Recently, in \cite{yang2013improved, fan2022large, zarrinmehr2019, tap_relax2019, zewen_tap_block_gd}, improvements of these algorithms were presented.  
Also, in subsequent years there have been developed a lot of new optimization methods, in particular, accelerated gradient methods \citep{nesterov2004introduction, nesterov2009primal-dual, nesterov2015universal}, which can be applied to the described problems. 

Another direction of research on travel modelling in recent years is related to capacitated transportation networks, which allow to overcome some limitations of the standard Beckmann traffic assignment model (\cite{nesterov2003stationary, zokaei2021wardrop, de2005solving, tap_modif_wang2019, tap_modif_SMITH2019140, anikin2020, zhu2020traffic}).

For the best of our knowledge, there is no works considering the application of accelerated gradient methods to combined models.

In this paper, we consider:
\begin{itemize}
    \item an entropy-based trip distribution model with hierarchical choice structure \citep{wilson1969use, fernandez1994network, abrahamsson1999formulation};
    \item the Beckmann traffic assignment model with inelastic demand \citep{beckmann1956studies};
    \item the stable dynamics traffic assignment model, where resulting flow distribution satisfy the network’s capacity constraints \citep{nesterov2003stationary};
    \item an NE model combining all the models mentioned above.
\end{itemize}
In the last case, we consider the nested combined model proposed in \cite{abrahamsson1999formulation}, where transit and road networks are independent, and the transit network has constant travel costs. We extend it to the case of the stable dynamics model for traffic assignment.

We employ accelerated primal-dual gradient methods to solve corresponding optimization problems and compare their performances to the classic Sinkhorn, Frank--Wolfe, and generalised Evans algorithms. Also, we provide theoretical guarantees for their convergence rate.

The \textbf{main contributions} of the paper are the following: 
\begin{itemize}
    \item We propose a way to solve the dual problem of the nested combined model of \cite{abrahamsson1999formulation} with a universal accelerated gradient method USTM \citep{gasnikov2018universal};
    \item We extend the nested combined model of \cite{abrahamsson1999formulation} to the case of capacitated networks: namely, we propose a way to solve the dual problem for searching equilibrium in combined trip distribution model with the nested choice structure and the stable dynamics traffic assignment model;
    \item We provide theoretical upper bounds on the complexity of searching network equilibrium by the USTM algorithm. 
    \item We conducted numerical experiments comparing different algorithms on Moscow and Berlin transportation networks.
\end{itemize}

The paper is organized as follows. In Section~\ref{sec:problem_statement}, we give a general problem statement for a combined trip distribution, modal split, and traffic assignment model. In Section~\ref{sec:dual_method_for_combined}, we describe the primal-dual accelerated method to solve the NE problem and provide its convergence analysis. 
In Sections~\ref{sec:tap_methods} and~\ref{sec:sinkhorn_variations}, we describe optimization algorithms that we consider for separate traffic assignment and trip distribution models. Section~\ref{sec:experiments} presents numerical experiments conducted on Moscow and Berlin transportation networks.

\section{Problem statement}
\label{sec:problem_statement}
We start with the description of the Beckmann and the stable dynamics models for searching the road network user equilibrium. Similarly to \cite{abrahamsson1999formulation}, we assume the road and the transit networks are independent, and there is no congestion effects in the transit network (its travel costs are constant and defined as the costs of the shortest routes). Then, in Subsection~\ref{sec:trip_distr_mode_choice}, we describe the trip distribution model with a hierarchical choice structure of destination and travel mode (by car, public transport, or on foot). And finally, in \ref{sec:combined_distr_assignment}, we consider the combined trip distribution-modal split-assignment problem and formulate its dual problem.

\subsection{Route Assignment Models}
\label{sec:route_assignment_models}
Let the urban road network be represented by a directed graph $\mathcal{G} = ( \mathcal{V}, \mathcal{E} )$, where vertices $\mathcal{V}$ correspond to intersections or centroids \citep{sheffi1985urban} and edges $\mathcal{E}$ correspond to roads, respectively.
Suppose we are given the travel demands: namely, let $d_{ij}$(veh/h) be a trip rate from  origin $i$ to destination $j$. We denote by $P_{ij}$ the set of all simple paths from $i$ to $j$. Respectively,  $P = \bigcup_{(i,j) \in OD} P_{ij}$ is the set of all possible routes for all origin-destination pairs $OD$. 
Agents traveling from node $i$ to node $j$ are distributed among paths from $P_{ij}$, i.e.\ for any $p \in P_{ij}$ there is a flow $x_p \in \R_+$ along the path $p$, and $\sum_{p \in P_{ij}} x_p = d_w$.
Flows from origin nodes to destination nodes create the traffic in the entire network $\mathcal{G}$, which can be represented by an element of
\begin{equation}\label{eq:X_set}
    X = X(d) = \Bigl\{x \in \R_{+}^{|P|} : \; \sum_{p \in P_{ij}} x_p = d_{ij}, \; (i, j) \in OD \Bigr\}.
\end{equation}
Note that the dimension of $X$ can be extremely large: e.g.\ for $n \times n$ Manhattan network $\log |P| = \Omega(n)$.
To describe a state of the network we do not need to know an entire vector $x$, but only flows on arcs:
\[
f_e(x) = \sum_{p \in P} \delta_{e p} x_p \quad \text{for} \quad e \in \mathcal{E},
\]
where $\delta_{e p} = \mathbbm{1}\{e \in p\}$. Let us introduce a matrix $\Theta$ such that $\Theta_{e, p} = \delta_{e p}$ for $e \in \mathcal{E}$, $p \in P$, so in vector notation we have $f = \Theta x$. To describe an equilibrium we use both path- and link-based notations $(x, t)$ or $(f, t)$. 

\textbf{Beckmann model \citep{beckmann1956studies, patriksson2015traffic}.} One of the key ideas behind the Beckmann model is that the cost (e.g.\ travel time, gas expenses, etc.) of passing a link $e$ is the same for all agents and depends solely on the flow $f_e$ along it. In what follows, we denote this cost for a given flow $f_e$ by $t_e = \tau_e(f_e)$. In practice the BPR functions are usually employed \citep{bpr_functions}:
\begin{equation}
\label{eq:Beckmann_cost_func} 
    \tau_e(f_e) = \tbar_e \left(1 + \rho \left( \frac{f_e}{\fbar_e}\right)^{\frac{1}{\mu}} \right), 
\end{equation}
where $\tbar_e$ are free flow times, and $\fbar_e$ are road capacities of a given network's link $e$. We take these functions with parameters $\rho = 0.15$ and $\mu = 0.25$.

Another essential point is a behavioral assumption on agents called the first Wardrop's principle: we suppose that each of them knows the state of the whole network and chooses a path $p$ minimizing the total cost
\[
T_p(t) = \sum_{e \in p} t_e.
\]

The cost functions are supposed to be continuous, non-decreasing, and non-negative. Then $(x^*, t^*)$, where $t^* = (t_e^*)_{e \in \mathcal{E}}$, is an equilibrium state, i.e.\ it satisfies conditions
\begin{gather*}
    t_e^* = \tau_e(f_e^*), \quad\text{where}\quad f^* = \Theta x^*, \\ 
    x^*_{p_w} > 0 \Longrightarrow T_{p_{ij}}(t^*) = T_{ij}(t^*) = \min_{p \in P_{ij}} T_p(t^*),
\end{gather*}
if and only if $x^*$ is a minimum of the potential function:
\begin{align*}
    \Psi(x) = \sum_{e \in \mathcal{E}} \underbrace{\int_{0}^{f_e} \tau_e (z) d z}_{\sigma_e(f_e)} 
    \longrightarrow \min_{f = \Theta x, \; x \in X} \\
    \Longleftrightarrow \Psi(f) = \sum_{e \in \mathcal{E}} \sigma_e (f_e) 
    \longrightarrow \min_{f = \Theta x : \; x \in X}, \tag{B}\label{PrimalBeckmann}
\end{align*}
and $t_e^* = \tau_e(f_e^*)$ \citep{beckmann1956studies}.

Another way to find an equilibrium numerically is by solving a dual problem. We can construct it according to Theorem~4 from \cite{nesterov2003stationary}, the solution of which is $t^*$:
\begin{equation}\label{DualBeckmann}
    Q(t) = \sum_{{ij} \in OD} d_{ij} T_{ij}(t) - \underbrace{\sum_{e \in \mathcal{E}} \sigma_e^*(t_e)}_{h(t)} \longrightarrow \max_{t \ge \bar{t}}, \tag{DualB}
\end{equation}
where 
\[
\sigma_e^*(t_e) = \sup_{f_e \ge 0} \{t_e f_e - \sigma_e(f_e) \} 
= \fbar_e \left( \frac{t_e - \tbar_e}{\tbar_e \rho} \right)^{\mu} \frac{\left(t_e - \bar{t}_e \right)}{1 + \mu}
\] 
is the conjugate function of $\sigma_e(f_e)$, $e \in \mathcal{E}$.

When we search for the solution to this problem  numerically, on every step of an applied method we can reconstruct primal variable $f$ from the current dual variable $t$: $f \in \partial \sum_{(i,j) \in OD} d_{ij} T_{ij}(t)$. Then we can use the duality gap~--- which is always nonnegative~--- for the estimation of the method's accuracy: 
\[
\Delta(f, t) = \Psi(f) - Q(t).
\]
It vanishes only at the equilibrium $(f^*, t^*)$.

\textbf{Stable dynamics model.}
\cite{nesterov2003stationary} proposed an alternative model called the stable dynamics model, which takes an intermediate place between static and dynamic network assignment models. Namely, its equilibrium can be interpreted as the stationary regime of some dynamic process. Its key assumption is that we no longer introduce a complex dependence of the travel cost on the flow (as in the standard static models), but only pose capacity constraints, i.e.\ the flow value on each link imposes the feasible set of travel times
\begin{equation}
\label{eq:SD_cost_func}
    \tau_e(f_e) = 
    \begin{cases} \tbar_e, & 0 \le f_e < \fbar_e,\\
        \left[ \tbar_e, \infty \right], & f_e = \fbar_e,\\
        +\infty, & f_e > \fbar_e.
    \end{cases}
\end{equation}
Unlike in the Beckmann model, there is no one-to-one correspondence between equilibrium travel times and flows on the links of the network. There are examples in \cite{nesterov2003stationary} illustrating the difference. Also, one can find in \cite{chudak2007static} a detailed comparison of equilibria in these two models conducted for large and small networks.

Hence, an equilibrium state $(x^*, t^*)$ of the stable dynamics model satisfies the next conditions:
\begin{gather*}
    t_e^* \in \tau_e(f_e^*), \\ 
    x^*_{p_{ij}} > 0 \Longrightarrow T_{p_{ij}}(t^*) = T_{ij}(t^*).
\end{gather*}

The above formula can be reformulated in terms of an optimization problem. The pair $(f^*, t^*)$ is an equilibrium if and only if it is a solution of the saddle-point problem
\begin{equation*}\label{SaddleSD}
\sum_{e \in E} [t_e f_e - (t_e - \tbar_e) \fbar_e] 
\longrightarrow \min_{\substack{f = \Theta x: \\ x \in X}} \max_{t_e \ge \tbar_e}, \tag{SaddleSD}
\end{equation*}
where its primal problem is
\begin{align*}
     \Psi(f) = \sum_{e \in \mathcal{E}} f_e \tbar_e 
    \longrightarrow \min_{\substack{f = \Theta x : \\ x \in X,\, f_e \le \fbar_e}},\tag{SD}\label{PrimalSD}
\end{align*}
and its dual problem is
\begin{align*}\label{DualSD}
    Q(t) = \sum_{(i,j) \in OD} d_{ij} T_{ij}(t) - \underbrace{\langle t - \tbar, \fbar \rangle}_{h(t)}
    \longrightarrow \max_{t_e \ge \tbar_e}. \tag{DualSD}
\end{align*}
In contrast with the Beckmann model, the equilibrium state in the stable dynamics model is defined by pair $(f^*, t^*)$ (in particular, it differs from the system optimum $(f^*, \tbar)$ in the model only by the time value).

In both cases the dual problem has form
\begin{equation*}
\label{eq::composite_Q}
    Q(t) = \sum_{(i,j) \in OD} d_{ij} T_{ij}(t) - h(t) \longrightarrow \max_{t \ge \bar{t}}.  
\end{equation*}
The optimization problem is convex, non-smooth and composite.

\subsection{Trip Distribution with Modal Split (D-MS)}
\label{sec:trip_distr_mode_choice}
Let us further assume that there are several trip purposes (demand layers), travel modes (transportation modes), and agents types. We use the logit model with calibration parameters $\alpha_{am}$, $\beta_{am}$ corresponding to choices of travel mode $m$ by agents of the type $a$.
Further, we consider the case when  $\alpha_{am}$ values are the same for all travel modes of agent type $a$:
\begin{equation*}
    \alpha_{am} \coloneqq \alpha_a.
\end{equation*}
Necessity of this condition will be explained below.

For example, if we want to make travelling by car (travel mode $m_1$) unavailable for non-car-owners $a_1$, we can set $\beta_{a_1 m_1} \coloneqq \inf$ to get zero trips $d^{a_1m_1}=0$. Thus, for every agent type $a$ we can implicitly set its group (nest) of available travel modes.

To define destination choice model, we use the entropy-based trip distribution model of \cite{wilson1969use}. For every trip purpose $r$ (e.g., home-work, home-other) we define calibration parameter $\gamma_r$. This parameter defines the sensitivity of agents with the trip purpose $r$ to trip length.

According to \cite{abrahamsson1999formulation}, \cite{fernandez1994network}, we consider the following problem:
\begin{equation}\label{eq:P1}
    \sum_{i, j, r, a, m} d_{ij}^{ram} T_{ij}^m 
    + \underbrace{\sum_{i, j, r, a} \frac{1}{\gamma_r} d_{ij}^{ra} \ln d_{ij}^{ra} + \sum_{i, j, r, a, m} \frac{1}{\alpha_a} d_{ij}^{ram} \left( \ln\left(\frac{d_{ij}^{ram}}{d_{ij}^{ra}} \right) + \beta_{am} \right)}_{H(d)} \rightarrow \min_{d \in \Pi'(l, w)},  \tag{P1}
\end{equation}
where 
\[
    \Pi'(l, w) = \left\{ \dram \ge 0: \sum_{j, m} d_{ij}^{ram} = l_i^{ra},
                                \sum_{i, a, m} d_{ij}^{ram} = w_j^{r} \right \},
\]
$d_{ij}^{ram}$ is a number of trips from zone $i$ to $j$ by travel mode $m$ of agents type $a$ with trip purpose $r$ and $d_{ij}^{ra} = \sum_m d_{ij}^{ram}$; $l_i^{ra}$ is a number of production from zone $i$ of agents type $a$ with trip purpose $r$; $w_j^{r}$ is a number of attractions to zone $j$ of the trip purpose $r$. 

This is the combined trip distribution-modal split (D-MS) problem, where the choice structure is nested: travel mode choice is conditioned by destination choice (\cite{abrahamsson1999formulation}). If $\gamma_r$ and $\alpha_a$ are equal, then \eqref{eq:P1} reduces to the problem that also corresponds to D-MS model with simultaneous choices of destination and travel mode with the same calibration parameters \citep{florian1978combined, abrahamsson1999formulation}.

For fixed values $d_{ij}^{ra}$, it is straightforward to check that the optimal $d_{ij}^{ram}$ satisfy the following relation:
\begin{equation}\label{eq:prob}
    \prob_{ij}^{ram} = \frac{\dram}{\dra} = \frac{\exp\left(- \alpha_a T_{ij}^m - \beta_{am}\right)}{ \sum_{m'} \exp\left(-\alpha_a T_{ij}^{m'} - \beta_{am'}\right)},
\end{equation}
where $T_{ij}^{am} = T_{ij}^m + \frac{\beta_{am}}{\alpha_a}$, i.e., the modal split corresponds to the logit model. Moreover,
\[
    \min_{\dram} \left(\sum_{i,j,r,a,m} \dram \tbigam + \frac{1}{\alpha_a} \sum_{i,j,r,a,m} \dram \ln \frac{\dram}{\dra} \right) =
    \dra \underbrace{\left(- \frac{1}{\alpha_a} \ln \sum_{m} \exp \left( - \alpha_a \tbigam \right) \right)}_{\tbiga},
\]
where $\tbiga$ is a composite travel cost for agents of type $a$.

Substituting $\dram = \probram \dra$ we reduce the problem \eqref{eq:P1} to
\begin{equation}\label{eq:E}
    E(d, T) = \sum_{i,j,r,a} \dra \tbiga + \sum_{i,j,r,a} \frac{1}{\gamma_r} \dra \ln \dra \rightarrow \min_{d \in \Pi(l, w)}, \tag{E}
\end{equation}
where 
\[
    \Pi(l,w) = \left\{ d \ge 0: \sum_{j} \dra = l_i^{ra}, \\
    \sum_{i,a} \dra = w_j^{r} \right\},
\]
and $T_{ij}^a = - \frac{1}{\alpha_a} \ln \sum_m \exp \left( - \alpha_a \tbigm - \beta_{am} \right)$.

Let us derive its dual problem. In our problem statement, the system of constraints $\Pi'(l, w)$ is consistent $\sum_{i,a} l_i^{r,a} = \sum_{j} w_j^r$. Therefore \cite{gasnikov2015universal}, we can introduce a tautological constraint 
\begin{align} \label{eq:taut_constr}
    &\sum_{i,j,a} d^{ra}_{ij} = \sum_{i,a} l_i^{r,a} = \sum_{j} w_j^r = N_{r}.
\end{align}
We will utilize tautological constraint \eqref{eq:taut_constr} to obtain dual function with bounded subgradient norm.
\begin{align*}
&\min_{d \in \Pi(l, w)}
\sum_{i,j,r,a} \dra \tbiga + \sum_{i,j,r,a} \frac{1}{\gamma_r} \dra \ln \dra 
\\&=
\min_{\substack{\dijra \ge 0\\ \sum_{i,j,a} \dijra = N_r}} 
\max_{\lambda \ge 0}
\sum_r \frac1\gamma_r \sum_{i,j,a} \dijra \ln \dijra
  + \sum_{i,j,a} \dijra \tbiga
  + \sum_{i,a} \lambda^l_{rai} \cbraces{\sum_j \dijra - l_i^{ra}}
  \\&+ \sum_{j} \lambda^w_{rj} \cbraces{\sum_{i,a} \dijra - w^r_j}
\\&=
\max_{\lambda \ge 0}
\sum_r
\min_{\substack{\dijra \ge 0\\ \sum_{i,j,a} \dijra = N_r}} 
  \frac1\gamma_r \sum_{i,j,a} \dijra \ln \dijra
  + \sum_{i,j,a} \dijra \tbiga
  + \sum_{i,a} \lambda^l_{rai} \cbraces{\sum_j \dijra - l_i^{ra}}
  \\&+ \sum_{j} \lambda^w_{rj} \cbraces{\sum_{i,a} \dijra - w^r_j}\numberthis\label{eq:d_dual}
\\&=
-\min_{\lambda \ge 0}
\varphi(\lambda^l, \lambda^w, T),
\end{align*}
where by $\varphi(\lambda^l, \lambda^w, T)$ we denoted the negative of the dual function.

Let $y$ be the dual variable for the tautological constraint $\sum_{i,j,a} d^{ra}_{ij} = N_{r}$.
Then taking the gradient by $d$ we get one of the optimality conditions for the inner minimization problem:
\begin{align*}
\frac{1}{\gamma_r}\cbraces{\ln \dijra + 1} + \tbiga + \lambda^l_{rai} + \lambda_{rj}^w + y = 0,
\end{align*}
therefore
\begin{align*}
\dijra = \exp\cbraces{-1 - \gamma_r \cbraces{\tbiga + \lambda^l_{rai} + \lambda_{rj}^w + y}}.
\end{align*}
By choosing $y$ such that $\dijra$ satisfies $\sum_{i,j,a} d^{ra}_{ij} = N_{r}$ we obtain
\begin{align}\label{eq:dijra}
    d^{ra}_{ij}(\lambda^l_r, \lambda^w_r, T) 
    = \frac{N_r \exp\cbraces{-\gamma_r\cbraces{ T^{a}_{ij} + \lambda^l_{rai} + \lambda^w_{rj}}}}
    {\sum_{i,j,a}\exp\cbraces{-\gamma_r\cbraces{ T^{a}_{ij} + \lambda^l_{rai} + \lambda^w_{rj}}}}. 
\end{align}
Substituting this into \eqref{eq:d_dual} yields
\begin{align} \label{eq:varphi_3}
\varphi(\lambda^l, \lambda^w, T) 
=
\sum_r \frac{N_r}{\gamma_r} \ln{\frac1{N_r}\sum_{i,j,a}\exp\cbraces{-\gamma_r\cbraces{ \tbiga + \lambda^l_{rai} + \lambda^w_{rj}} }}
+ \sum_{i,a} \lambda_{rai}^l l_i^{ra} + \sum_j \lambda_{rj}^w w^r_j.
\end{align}

\subsection{Combined distribution–modal split–assignment problem (D-MS-A)}
\label{sec:combined_distr_assignment}

Now, we combine the road, the transit, and the pedestrian networks into one multi-modal network, which we denote again by $\mathcal{G} = ( \mathcal{V}, \mathcal{E} )$. 
Slightly abusing notations, in the same way as in Subsection~\ref{sec:route_assignment_models} we can define the set of path flows $X(d)$ corresponding to an interzonal trip matrix $d \in \Pi'(l, w)$, and link flows $f_e^{ram}$, $f_e^m = \sum_{r,a} f_e^{ram}$, and $f_e = \sum_{m} f_e^m$.

According to \cite{abrahamsson1999formulation}, the combined distribution–modal split–assignment problem can be formulated as follows:
\begin{equation*}\label{eq:P3} 
    P_3(f,d) = \Psi(f) + H(d) \rightarrow \min_{\substack{f = \Theta x, \; x \in X(d)\\ d \in \Pi'(l,w)}}, \tag{P3}
\end{equation*}
where 
\[
\Psi(f) = \sum_{e \in \mathcal{E}} \left(\sigma_e(f_e) + \sum_{m \in V} c_e^{m} f_e^{m}\right) .
\]

Similarly to Subsection~\ref{sec:route_assignment_models} we obtain from \eqref{eq:P3} the saddle-point problem 
\begin{equation} \label{eq:S3'}
    S_3'(d, t) = \sum_{i,j,r,a,m} \dram \tbigm (t) - h(t) + H(d) \rightarrow \min_{d \in \Pi'(l, w)} \ \max_{t \ge \tbar}, \tag{S3'}
\end{equation}
where $\tbigm (t)$ is the minimal cost of the path from $i \in O$ to $j \in D$ with the links cost $t_e + c_e^m$.
According to Subsection~\ref{sec:trip_distr_mode_choice}, the above problem reduces to
\begin{equation} \label{eq:S3}
    S_3(d,t) = \underbrace{\sum_{i,j,r,a} \dra \tbiga(t) + \sum_{i,j,r,a} \frac{1}{\gamma_r} \dra \ln \dra}_{ E(d, T(t))} - h(t) \to \min_{d \in \Pi(l,w)} \max_{t \ge \tbar}, \tag{S3}
\end{equation}
where $\tbiga (t) = - \frac{1}{\alpha_a} \ln \left(\sum_m \exp \left(- \alpha_a \tbigm (t) - \beta_{am} \right) \right)$.

Respectively, the dual problem is
\begin{equation}\label{eq:D3}
    D_3(\lambda, t) = -\varphi(\lambda^l, \lambda^w, T(t)) - h(t)
    \longrightarrow \max_{t \ge \tbar, \; \lambda^l, \lambda^w}. \tag{D3}
\end{equation}

Thus, there are several ways to formulate an optimization problem. In this paper, we consider the following particular formulation of the problem and further provide convergence analysis of the accelerated gradient method application to it:
\begin{equation*}\label{eq:D3'}
    D'_3 (t) = - \varphi_3(t) - h(t)
    \longrightarrow \max_{t \ge \tbar}, \tag{D3'}
\end{equation*}
where 
\[
\varphi_3(t) = \min_{\lambda} \varphi(\lambda, T(t)) = - \min_{d \in \Pi(l,w)} E(d, T(t)).
\]

\section{Dual approach for solving the combined model}
\label{sec:dual_method_for_combined}
In Subsection~\ref{sec:dual_for_ne_problem}, we describe the universal gradient method of similar triangles (USTM) for solving the dual problem \eqref{eq:D3'} of the described combined model. And we provide its convergence analysis in Subsection~\eqref{sec:ne_convergence}.

\subsection{Dual method for NE problem}
\label{sec:dual_for_ne_problem}

\begin{algorithm}[h]
    \caption{Universal Method of Similar Triangles}
    \label{alg::univ_triangles}
    \begin{algorithmic}[1]
    \REQUIRE $L_0 > 0$, starting point $t^0$, accuracy $\eps > 0$
    \STATE $u^0 \coloneqq t^0$, $A_0 \coloneqq 0$, $k \coloneqq 0$
    \REPEAT
        \STATE $L_{k+1} \coloneqq L_k / 2$ 
        \WHILE{\TRUE}
            \STATE $\alpha_{k+1} \coloneqq \frac{1}{2 L_{k+1} } + \sqrt{\frac{1}{4 L_{k+1}^2} + \frac{A_k}{L_{k+1}} }, \quad A_{k+1} \coloneqq A_k + \alpha_{k+1}$
            \STATE $y^{k+1} \coloneqq \frac{\alpha_{k+1} u^k + A_k t^k}{A_{k+1}}$
            \STATE $u^{k+1} \coloneqq \argmin\limits_{t \in \dom h} \phi_{k+1}(t)$
            \STATE $t^{k+1} \coloneqq \frac{\alpha_{k+1} u^{k+1} + A_k t^k}{A_{k+1}}$
            \IF{$\tilde{\Phi}(t^{k+1}) \le \tilde{\Phi}(y^{k+1}) + \left\langle \tilde{\nabla} \Phi(y^{k+1}), t^{k+1} - y^{k+1} \right\rangle 
                + \frac{L_{k+1}}{2} \norm*{t^{k+1} - y^{k+1}}_2^2 + \frac{\alpha_{k+1}}{2 A_{k+1}} \eps$}
                \STATE \BREAK
            \ELSE
                \STATE $L_{k+1} \coloneqq 2 L_{k+1}$
            \ENDIF
        \ENDWHILE
        \STATE $k \coloneqq k + 1$
    \UNTIL{Stopping criterion is fulfilled}
    \end{algorithmic}
\end{algorithm}

A popular approach for searching equilibrium in combined models is the partial linearization algorithm of \cite{evans1976} and its modifications for multi-modal and multi-user cases \citep{abrahamsson1999formulation, boyce1983implementation}. The approach is further developed in \cite{yang2013improved} by incorporating better line-search procedures. Note that the algorithm can be viewed as partly dual, because it is formulated in terms external to the primal problem: it includes cost matrices $T_{ij}$, which are the dual variables of the saddle-point problem \eqref{eq:S3} (or \eqref{eq:S3'}) or the dual problem \eqref{eq:D3}. But still, the algorithm is essentially primal, since it optimizes \eqref{eq:P3} by flow and trip distribution pair. 

Here we propose an alternative approach based on solving the dual problem \eqref{eq:D3'} with the universal method of similar triangles (USTM), and afterwards we prove the convergence rates for it.

Algorithm~\ref{alg::univ_triangles} provides the pseudocode of USTM with an inexact oracle and the euclidean prox-structure.
Here we used the following notations:
\[
\phi_0(t) = \frac{1}{2} \norm*{t - t^0}_2^2,
\]
\[
\phi_{k+1}(t) = \phi_k(t) + \alpha_{k+1} \left[\tilde{\Phi}(y^{k+1}) + \left\langle \tilde{\nabla} \Phi(y^{k+1}), t - y^{k+1} \right\rangle + h(t) \right].
\]
Note that we did not specify the stopping criterion as it can be different for different models \citep{fan2022large}. 

To find a network equilibrium in D-MS-A model, we apply USTM to minimize the composite objective $-D'_3(t) = \varphi_3(t) + h(t)$ in \eqref{eq:D3'}, thus we set $\Phi(t) \coloneqq \varphi_3(t)$ in Algorithm~\ref{alg::univ_triangles}.
Recall that
\[
\varphi_3(t) = - \min_{d \in \Pi(l, w)} E(d, T(t)) .
\]
Note that at each iteration we need to compute $\varphi_3$ and $\nabla \varphi_3$ with travel costs $y^{k+1}$, $t^{k+1}$. 
Since this itself is done by an iterative procedure, we cannot expect to find the exact solution of the subproblem.
Instead, we use the following inexact oracle for $\varphi_3$:
\[
\tilde{\varphi}_3(t) = \varphi_{3,\delta}(t) = - E(\tilde{d}(t), T(t)), \quad
\tilde{\nabla} \varphi_3(t) = \nabla_\delta \varphi_3(t) = - \nabla_t E(\tilde{d}(t), T(t)),
\]
where $\delta \ge 0$ and $\tilde{d}(t) = d_\delta(t) \in \Pi(l, w)$ (see Subsection~\ref{sec:reconstruct_primal}) is a $\delta$-solution of $\min_d E(d, T(t))$, i.e.\
\[
E(d_\delta(t), T(t)) \le \min_{d \in \Pi(l, w)} E(d, T(t)) + \delta.
\]

Recall that $E(d, T(t))$ is concave w.r.t.\ $t$, and its superdifferential $\partial_t E(d, T(t))$ is given by
\[
\partial_t E(d, T(t)) = \sum_{i,j,r,a} d_{ij}^{ra} \partial T_{ij}^a(t) .
\]
Further, $d_{ij}^{ra} \partial T^a_{ij}(t) = \sum_m d_{ij}^{ram} \partial T_{ij}^m(t)$, and 
\[
\partial T_{ij}^m(t) = \partial \min_{a_p^m \in P_{ij}^m} \langle t, a_{pm} \rangle
= \mathrm{Conv}\left\{a_p^m \in P_{ij}^m : \langle t, a_{pm} \rangle = T_{ij}^m(t)\right\} ,
\]
where $a_p^m \in \{0,1\}^{|\mathcal{E}|}$ is a binary vector encoding a path $p$ for the travel mode $m$. Note that several shortest paths may exist.
Finally, we get that 
\begin{equation}\label{eq:supdiff_E}
    \partial_t E(d, T(t)) = \sum_{i,j,r,a,m} d_{ij}^{ram} \mathrm{Conv}\left\{a_p^m \in P_{ij}^m : \langle t, a_{pm} \rangle = T_{ij}^m(t)\right\},
\end{equation}
and any supergradient $\nabla_t E(d, T(t))$ is a vector of link flows by shortest paths, corresponding to the trip distribution $d$.

Since we solve the dual problem~\eqref{eq:D3'}, we need a way to recover an approximate solution $(d, f)$ of the primal problem~\eqref{eq:P3}.
For any $t \ge 0$, given $\tilde{d}_{ij}^{ra}(t)$ and $T_{ij}^m(t)$, define $d'(t) \in \Pi'(l, w)$ by formula~\eqref{eq:prob}. Then we reconstruct a full correspondence matrix after $K$ iterations of Algorithm~\ref{alg::univ_triangles} as
\begin{equation}\label{eq:corr_recover}
    \hat{d}^K = \frac{1}{A_K} \sum_{k=1}^K \alpha_k d'(y^k) \in \Pi'(l, w).
\end{equation}
Corresponding link flows can be recovered as \citep[see][f.~(18)]{kubentayeva2021sd_and_beckmann}
\begin{equation}\label{eq:flow_recover}
    [\hat{f}^K]_e^m = \frac{1}{A_K} \sum_{k=1}^K \alpha_k [f^k]_e^m .
\end{equation}
where $f^k$ are link flows by shortest paths for times $y^k$ and correspondence matrix $d'(y^k)$, such that 
\[
\sum_m [f^k]_e^m = - \left[\tilde{\nabla} \varphi_3(y^k)\right]_e, \quad e \in \mathcal{E}.
\]


\subsection{Convergence Analysis}
\label{sec:ne_convergence}

Below we derive some properties of the problem and then use them to apply the USTM convergence theorem, what gives us the convergence rate of our dual algorithm for searching equlibria in combined model.


The next lemma is a trivial counterpart of f.~(5) in \cite{kubentayeva2021sd_and_beckmann}, following from~\eqref{eq:supdiff_E}.

\begin{lemma}\label{lem:lipschitz}
   For any $d, d' \in \Pi(l, w)$, $t, t' \ge 0$, and supergradients $\nabla_t E(d, T(t))$, $\nabla_t E(d', T(t'))$ it holds that $\norm*{\nabla_t E(d, T(t)) - \nabla_t E(d', T(t'))}_2 \le M = \sqrt{2 H} N$, where $H \le |\mathcal{V}|-1$ is the maximum simple path length in the network, and $N = \sum_r N_r$ is the total number of active agents.
\end{lemma}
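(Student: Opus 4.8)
The plan is to bound the Euclidean norm of a single supergradient by $\sqrt{H}\,N$ and then combine two such bounds using the nonnegativity of link flows. The starting point is the structural description already recorded in~\eqref{eq:supdiff_E}: any supergradient $g \coloneqq \nabla_t E(d, T(t))$ is a vector of nonnegative link flows of the form $g = \sum_{i,j,r,a,m} d_{ij}^{ram}\, a_p^m$, where each $a_p^m \in \{0,1\}^{|\mathcal{E}|}$ is the incidence vector of a shortest path and therefore has at most $H$ nonzero entries.

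First I would compute $\norm*{g}_2^2 = \sum_{k,k'} d_k d_{k'} \langle a_k, a_{k'}\rangle$, where $k$ ranges over the tuples $(i,j,r,a,m)$ and $d_k = d_{ij}^{ram} \ge 0$. The inner product $\langle a_k, a_{k'}\rangle$ counts the edges shared by two shortest paths, so $0 \le \langle a_k, a_{k'}\rangle \le H$. Because all the coefficients $d_k$ are nonnegative, this yields $\norm*{g}_2^2 \le H \bigl(\sum_k d_k\bigr)^2$. It then remains to observe that $\sum_k d_k = \sum_{i,j,r,a,m} d_{ij}^{ram} = \sum_{i,j,r,a} d_{ij}^{ra} = \sum_r N_r = N$ for any $d \in \Pi(l,w)$, using the marginal identity $\sum_m d_{ij}^{ram} = d_{ij}^{ra}$ together with the tautological constraint~\eqref{eq:taut_constr}. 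Hence $\norm*{g}_2 \le \sqrt{H}\,N$, and the identical bound holds for $g' \coloneqq \nabla_t E(d', T(t'))$.

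Finally I would pass to the difference. Since both $g$ and $g'$ are coordinatewise nonnegative, for every link $e$ we have $(g_e - g'_e)^2 = g_e^2 - 2 g_e g'_e + g_e'^2 \le g_e^2 + g_e'^2$, whence $\norm*{g - g'}_2^2 \le \norm*{g}_2^2 + \norm*{g'}_2^2 \le 2 H N^2$, and taking square roots gives the claimed bound $\norm*{g - g'}_2 \le \sqrt{2H}\,N = M$.

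The computation is short, and the two inequalities that make it work are the points to watch: the estimate $\langle a_k, a_{k'}\rangle \le H$ rests on reading the inner product of binary path-incidence vectors as a count of common edges bounded by the length of the shorter simple path, and the reduction $\sum_k d_{ij}^{ram} = N$ hinges on the marginal identity plus membership in $\Pi(l,w)$. The step I expect to be least automatic is recognizing that one should bound the difference through the nonnegativity trick $(g_e - g'_e)^2 \le g_e^2 + g_e'^2$ rather than trying to control $g - g'$ directly: the two supergradients may be assembled from entirely different path choices and distributions, so there is no structural cancellation to exploit, and the factor $\sqrt{2}$ in $M$ is exactly the price of this crude but valid bound.
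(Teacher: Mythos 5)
Your proof is correct and is exactly the argument the paper intends: the paper itself gives no explicit proof, saying only that the lemma follows from \eqref{eq:supdiff_E} as a counterpart of f.~(5) in \cite{kubentayeva2021sd_and_beckmann}, and that implicit argument is precisely yours~--- supergradients are coordinatewise-nonnegative shortest-path flow vectors of total mass $N$, each path contributing at most $H$ unit entries, giving $\sqrt{H}N$ per supergradient and the factor $\sqrt{2}$ from the nonnegativity trick $(g_e - g'_e)^2 \le g_e^2 + (g'_e)^2$. The one detail worth adding is that a general element of \eqref{eq:supdiff_E} is $\sum_k d_k c_k$ with $c_k \in \mathrm{Conv}\{a_p^m\}$ rather than one path per tuple $k$, but both the nonnegativity and the estimate $\langle c_k, c_{k'} \rangle \le H$ survive convex combinations by bilinearity, so your computation goes through unchanged.
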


Typically (e.g.\ for a Manhattan network) $H = O(\sqrt{|\mathcal{V}|})$. 

Recall the following standard result concerning inexact oracles.

\begin{lemma}\label{lem:delta-subgrad}
    Let for any $t, t' \ge 0$
    \[
    \tilde{\varphi}_3(t') + \delta \ge \varphi_3(t') 
    \ge \tilde{\varphi}_3(t) + \langle \tilde{\nabla} \varphi_3(t), t' - t \rangle.
    \]
\end{lemma}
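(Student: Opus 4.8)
The plan is to verify the two claimed inequalities separately by unwinding the definitions of the inexact oracle $\tilde{\varphi}_3(t) = -E(\tilde{d}(t), T(t))$ and $\tilde{\nabla}\varphi_3(t) = -\nabla_t E(\tilde{d}(t), T(t))$, relying on only two facts: the $\delta$-optimality of $\tilde{d}(t)$ as a minimizer of $E(\cdot, T(t))$ over $\Pi(l,w)$, and the concavity of the map $t \mapsto E(d, T(t))$ for each fixed $d$ (recorded just before~\eqref{eq:supdiff_E}). Throughout I would keep careful track of the sign flip introduced by $\varphi_3(t) = -\min_{d \in \Pi(l,w)} E(d, T(t))$: concavity of $E$ in $t$ becomes convexity of $\varphi_3$, and a supergradient of $E$ becomes, up to sign, the subgradient-type object $\tilde{\nabla}\varphi_3$.

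For the left inequality $\tilde{\varphi}_3(t') + \delta \ge \varphi_3(t')$, I would simply invoke the defining property of a $\delta$-solution at the point $t'$, namely $E(\tilde{d}(t'), T(t')) \le \min_{d} E(d, T(t')) + \delta$. Negating both sides and identifying $\tilde{\varphi}_3(t') = -E(\tilde{d}(t'), T(t'))$ and $\varphi_3(t') = -\min_d E(d, T(t'))$ yields the claim at once.

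For the right inequality $\varphi_3(t') \ge \tilde{\varphi}_3(t) + \langle \tilde{\nabla}\varphi_3(t), t' - t\rangle$, I would first drop optimality at $t'$: since $\tilde{d}(t) \in \Pi(l,w)$ is merely feasible for the problem at $t'$, we have $\min_d E(d, T(t')) \le E(\tilde{d}(t), T(t'))$, hence $\varphi_3(t') \ge -E(\tilde{d}(t), T(t'))$. I would then bound $-E(\tilde{d}(t), T(\cdot))$ from below using concavity of $E(\tilde{d}(t), T(\cdot))$ together with its supergradient at $t$: the concavity inequality $E(\tilde{d}(t), T(t')) \le E(\tilde{d}(t), T(t)) + \langle \nabla_t E(\tilde{d}(t), T(t)), t'-t\rangle$, once negated, gives precisely $-E(\tilde{d}(t), T(t')) \ge \tilde{\varphi}_3(t) + \langle \tilde{\nabla}\varphi_3(t), t'-t\rangle$. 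Chaining the two displays completes the argument.

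I do not expect a genuine obstacle here; the only point demanding care is the bookkeeping of signs and the two distinct roles played by $\tilde{d}$ — it enters as a $\delta$-minimizer in the left inequality but only as a feasible point in the right one. The conclusion is the standard statement that $(\tilde{\varphi}_3, \tilde{\nabla}\varphi_3)$ is a $(\delta, 0)$-type inexact oracle for the convex function $\varphi_3$, which is exactly the form required to apply the USTM convergence theorem invoked in the sequel.
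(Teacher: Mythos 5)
Your proposal is correct and follows essentially the same route as the paper's proof: the right inequality via feasibility of $\tilde{d}(t)$ at $t'$ combined with the concavity (supergradient) inequality for $E(\tilde{d}(t), T(\cdot))$, and the left inequality directly from the $\delta$-optimality of $\tilde{d}(t')$. In fact you are slightly more explicit than the paper, which handles the left inequality only with the phrase ``The claim follows.''
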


\begin{proof}
    Since $E(d, T(t))$ is concave w.r.t.\ $t$,
    \[
    E(\tilde{d}(t), T(t')) \le E(\tilde{d}(t), T(t)) + \langle \nabla_t E(\tilde{d}(t), T(t)), t' - t \rangle
    = - \tilde{\varphi}_3(t) - \langle \tilde{\nabla} \varphi_3(t), t' - t \rangle.
    \]
    Thus,
    \[
    \varphi_3(t') \ge - E(\tilde{d}(t), T(t')) 
    \ge \tilde{\varphi}_3(t) + \langle \tilde{\nabla} \varphi_3(t), t' - t \rangle.
    \]
    The claim follows.
\end{proof}

The following bound is the main tool to prove the convergence rate. It is an analogue of Lemma~2 in \cite{gasnikov2016universal} adapted to the case of an inexact oracle.

\begin{lemma}\label{lem:ustm}
    Assume at the $k$-th iteration of Algorithm~\ref{alg::univ_triangles} we call an inexact oracle $(\tilde{\Phi}, \tilde{\nabla} \Phi)$ satisfying
    \[
    \tilde{\Phi}(t') + \delta_k \ge \Phi(t') \ge \tilde{\Phi}(t) + \langle \tilde{\nabla} \Phi(t), t' - t \rangle \quad \forall t, t' \in \dom h,
    \]
    with $\delta_k = \frac{\alpha_{k+1} \eps}{4 A_{k+1}}$. Then for any $k > 0$
    \[
    \Phi(t^k) + h(t^k) \le \frac{1}{A_k} \phi_k(u^k) + \frac{3 \eps}{4} 
    \le \frac{1}{A_k} \phi_k(t) - \frac{1}{2 A_k} \norm*{t - u^k}_2^2 + \frac{3 \eps}{4} \quad \forall t \in \dom h.
    \]
    Moreover,
    \[
    \Phi(t^k) + h(t^k) + \frac{1}{2 A_k} \norm*{t^* - u^k}_2^2 \le \Phi(t^*) + h(t^*) + \frac{1}{2 A_k} \norm*{t^* - t^0}_2^2,
    \]
    where $t^* = \argmin \Phi(t) + h(t)$.
\end{lemma}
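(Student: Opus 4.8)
The plan is to prove the first chain by induction on $k$, isolating the substantive part as the claim
$$A_k\bigl(\Phi(t^k)+h(t^k)\bigr)\le\phi_k(u^k)+\tfrac{3}{4}\eps\,A_k,$$
and then to read off the second inequality of the chain and the ``moreover'' statement as consequences. The second inequality is immediate from strong convexity: each $\phi_k$ is $1$-strongly convex, being $\phi_0(t)=\tfrac12\norm*{t-t^0}_2^2$ plus terms affine in $t$ together with the convex functions $\alpha_{i}h$; since $u^k=\argmin_{t\in\dom h}\phi_k(t)$, the first-order optimality condition gives $\phi_k(t)\ge\phi_k(u^k)+\tfrac12\norm*{t-u^k}_2^2$ for all $t\in\dom h$, which upon dividing by $A_k$ is exactly the passage from $\tfrac{1}{A_k}\phi_k(u^k)$ to $\tfrac{1}{A_k}\phi_k(t)-\tfrac{1}{2A_k}\norm*{t-u^k}_2^2$.

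For the induction I would first record the algebraic identities that drive the cancellations. The value $\alpha_{k+1}$ is the positive root of $L_{k+1}\alpha^2-\alpha-A_k=0$, so $L_{k+1}\alpha_{k+1}^2=A_{k+1}$; and the similar-triangle definitions give $A_{k+1}y^{k+1}=\alpha_{k+1}u^k+A_kt^k$ and $A_{k+1}t^{k+1}=\alpha_{k+1}u^{k+1}+A_kt^k$, whence $A_{k+1}(t^{k+1}-y^{k+1})=\alpha_{k+1}(u^{k+1}-u^k)$ and therefore $\tfrac12\norm*{u^{k+1}-u^k}_2^2=\tfrac{A_{k+1}L_{k+1}}{2}\norm*{t^{k+1}-y^{k+1}}_2^2$. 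The base case $k=0$ holds trivially since $A_0=0$ and $\phi_0(u^0)=0$. In the inductive step, writing $\psi_k:=\phi_k(u^k)$, I would lower-bound $\psi_{k+1}=\phi_{k+1}(u^{k+1})$ by expanding $\phi_{k+1}=\phi_k+\alpha_{k+1}\bigl[\tilde\Phi(y^{k+1})+\langle\tilde\nabla\Phi(y^{k+1}),\,\cdot-y^{k+1}\rangle+h\bigr]$, using strong convexity of $\phi_k$ to extract $\tfrac12\norm*{u^{k+1}-u^k}_2^2$, and invoking the induction hypothesis together with the oracle lower bound $\Phi(t^k)\ge\tilde\Phi(y^{k+1})+\langle\tilde\nabla\Phi(y^{k+1}),t^k-y^{k+1}\rangle$. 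The three-point identities collapse the affine and $\tilde\Phi$ terms into $A_{k+1}\tilde\Phi(y^{k+1})+A_{k+1}\langle\tilde\nabla\Phi(y^{k+1}),t^{k+1}-y^{k+1}\rangle$, convexity of $h$ bounds $A_kh(t^k)+\alpha_{k+1}h(u^{k+1})$ below by $A_{k+1}h(t^{k+1})$, and the quadratic identity above produces precisely the term the line-search condition is built to absorb. Applying the line-search inequality and then the oracle upper bound $\tilde\Phi(t^{k+1})\ge\Phi(t^{k+1})-\delta_k$ yields $\psi_{k+1}\ge A_{k+1}\bigl(\Phi(t^{k+1})+h(t^{k+1})\bigr)-\tfrac{3}{4}\eps\,A_{k+1}$, closing the induction.

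The ``moreover'' statement then follows by taking $t=t^*$ in the second inequality of the chain and bounding $\phi_k(t^*)$: by the oracle lower bound each linearization obeys $\tilde\Phi(y^i)+\langle\tilde\nabla\Phi(y^i),t^*-y^i\rangle\le\Phi(t^*)$, so $\phi_k(t^*)\le\tfrac12\norm*{t^*-t^0}_2^2+A_k\bigl(\Phi(t^*)+h(t^*)\bigr)$; dividing by $A_k$ and rearranging gives the result. I expect the genuine obstacle to be the error bookkeeping rather than any single inequality: one must check that the accumulated slack is exactly $\tfrac34\eps$. In the inductive step the hypothesis contributes $\tfrac34\eps\,A_k$, the line-search slack $\tfrac{\alpha_{k+1}}{2A_{k+1}}\eps$ contributes $\tfrac{\alpha_{k+1}}{2}\eps$ after scaling by $A_{k+1}$, and the oracle error $\delta_k=\tfrac{\alpha_{k+1}\eps}{4A_{k+1}}$ contributes $\tfrac{\alpha_{k+1}}{4}\eps$; these sum to $\tfrac34\eps\,A_k+\tfrac34\alpha_{k+1}\eps=\tfrac34\eps\,A_{k+1}$, which is exactly why $\delta_k$ carries the factor $1/4$ and why the constant is $3/4$. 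I would also note that substituting $t^*$ strictly leaves an additional $+\tfrac34\eps$ on the right-hand side of the ``moreover'' bound; this is harmless for the subsequent rate estimate (where $\eps$ is the target accuracy) and is suppressed in the stated form.
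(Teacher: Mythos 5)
Your proof is correct and follows essentially the same route as the paper's own: the same induction on the claim $A_k\bigl(\Phi(t^k)+h(t^k)\bigr)\le\phi_k(u^k)+\tfrac{3}{4}\eps A_k$, the same identities $L_{k+1}\alpha_{k+1}^2=A_{k+1}$ and $A_{k+1}(t^{k+1}-y^{k+1})=\alpha_{k+1}(u^{k+1}-u^k)$, the same use of strong convexity of $\phi_k$, convexity of $h$, the oracle bounds, and the same error bookkeeping $\tfrac{\alpha_{k+1}}{2}\eps+\tfrac{\alpha_{k+1}}{4}\eps+\tfrac{3}{4}\eps A_k=\tfrac{3}{4}\eps A_{k+1}$. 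Your closing remark is also accurate: the paper derives the ``moreover'' bound exactly as you do (substituting $t=t^*$ and bounding $\phi_k(t^*)$ via the oracle lower bound), which in fact leaves an additional $+\tfrac{3}{4}\eps$ on the right-hand side that the paper's statement and proof silently suppress, so your version is, if anything, more careful on this point.
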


\begin{proof}
    We are going to prove by induction that
    \[
    A_k (\Phi(t^k) + h(t^k)) \le \phi_k(u^k) + A_k \frac{3 \eps}{4}.
    \]
    Note that since $\phi_k$ is $1$-strongly convex,
    \[
    \phi_k(u^k) \le \phi_k(t) - \frac{1}{2} \norm*{t - u^k}_2^2 \quad \forall t \in \dom h.
    \]
    
    The inner stopping criterion yields that
    \begin{align*}
        \tilde{\Phi}(t^{k+1}) &\le \tilde{\Phi}(y^{k+1}) + \langle \tilde{\nabla} \Phi(y^{k+1}), t^{k+1} - y^{k+1} \rangle + \frac{L_{k+1}}{2} \norm*{t^{k+1} - y^{k+1}}_2^2 + \frac{\alpha_{k+1}}{2 A_{k+1}} \eps \\
        &= \tilde{\Phi}(y^{k+1}) + \frac{\alpha_{k+1}}{A_{k+1}} \left[\langle \tilde{\nabla} \Phi(y^{k+1}), u^{k+1} - u^k \rangle + \frac{\eps}{2}\right] + \frac{1}{2 A_{k+1}} \norm*{u^{k+1} - u^k}_2^2.
    \end{align*}
    By the assumptions of the lemma,
    \[
    \tilde{\Phi}(y^{k+1}) \le \Phi(t^k) - \langle \tilde{\nabla} \Phi(y^{k+1}), t^k - y^{k+1}\rangle
    = \Phi(t^k) + \frac{\alpha_{k+1}}{A_{k+1}} \langle \tilde{\nabla} \Phi(y^{k+1}), u^k - t^k\rangle ,
    \]
    thus
    \begin{align*}
        A_{k+1} &\tilde{\Phi}(t^{k+1}) \le A_{k+1} \tilde{\Phi}(y^{k+1}) + \alpha_{k+1} \left[\langle \tilde{\nabla} \Phi(y^{k+1}), u^{k+1} - u^k \rangle + \frac{\eps}{2}\right] + \frac{1}{2} \norm*{u^{k+1} - u^k}_2^2 \\
        &\le A_k \Phi(t^k) + \alpha_{k+1} \left[\tilde{\Phi}(y^{k+1}) + \langle \tilde{\nabla} \Phi(y^{k+1}), u^{k+1} - u^k + \frac{A_k}{A_{k+1}} (u^k - t^k)\rangle + \frac{\eps}{2}\right] \\
        & \quad + \frac{1}{2} \norm*{u^{k+1} - u^k}_2^2 \\
        &= A_k \Phi(t^k) + \alpha_{k+1} \left[\tilde{\Phi}(y^{k+1}) + \langle \tilde{\nabla} \Phi(y^{k+1}), u^{k+1} - y^{k+1}\rangle + \frac{\eps}{2}\right] + \frac{1}{2} \norm*{u^{k+1} - u^k}_2^2 .
    \end{align*}
    Now, using the convexity of $h$ and the definition of $\delta_k$ we obtain that
    \begin{align*}
        A_{k+1} &\left[\Phi(t^{k+1}) + h(t^{k+1})\right] \\
        &\le A_{k+1} \left[\tilde{\Phi}(t^{k+1}) + \delta_k\right] + A_k h(t^k) + \alpha_{k+1} h(u^{k+1}) \\
        &= A_k \left[\Phi(t^k) + h(t^k)\right] + \alpha_{k+1} \left[\tilde{\Phi}(y^{k+1}) + \langle \tilde{\nabla} \Phi(y^{k+1}), u^{k+1} - y^{k+1}\rangle + h(u^{k+1}) + \frac{3 \eps}{4}\right] \\
        &\quad + \frac{1}{2} \norm*{u^{k+1} - u^k}_2^2 \\
        &\le A_k \phi_k(u^{k+1}) + \alpha_{k+1} \left[\tilde{\Phi}(y^{k+1}) + \langle \tilde{\nabla} \Phi(y^{k+1}), u^{k+1} - y^{k+1}\rangle + h(u^{k+1})\right] + A_{k+1} \frac{3 \eps}{4} \\
        &= \phi_{k+1}(u^{k+1}) + A_{k+1} \frac{3 \eps}{4} .
    \end{align*}

    The last claim of the lemma follows from the inequality 
    \begin{equation*}
        \Phi(t^*) \ge \tilde{\Phi}(t) + \langle \tilde{\nabla} \Phi(t), t^* - t \rangle ,
    \end{equation*} 
    what implies
    \[
    \phi_k(t^*) \le \Phi(t^*) + \frac{1}{2 A_k} \norm*{t^* - t^0}_2^2.
    \]
\end{proof}

Now we are ready to prove the main result of this section: a primal-dual convergence rate for USTM in the combined model. 
The complexity analysis in the next theorem is similar to Theorems~3 and~4 in \cite{kubentayeva2021sd_and_beckmann}, where USTM was applied to the route assignment problem.

\begin{theorem}\label{thm:ustm}
    Assume $t^0 = \tbar$, $L_0 \le \frac{M^2}{\eps}$, and at the $k$-th iteration problem \eqref{eq:E} is solved with accuracy $\delta_k = \frac{\alpha_{k+1} \eps}{4 A_{k+1}}$.
    Define
    \begin{equation*}\label{def:R_tilde_R}
        R = \norm*{t^* - \tbar}, \quad 
        \tilde{R}^2 = \rho^2 N^{2 / \mu} \sum_{e \in E} \frac{\tbar_e^2}{\fbar_e^{2 / \mu}} .
    \end{equation*}
    Then in the case of Beckmann's model, after at most 
    \[
    K = 4 \left(\frac{M \tilde{R}}{\eps}\right)^2
    \]
    iterations it holds that
    \[
    0 \le P_3(\hat{d}^K, \hat{f}^K) - D_3'(t^K) \le \eps.
    \]
    In the case of Stable Dynamics model, after at most
    \[
    K = 4 \left(\frac{M R}{\eps}\right)^2
    \]
    iterations it holds that
    \begin{gather*} 
        0 \le P_3(\hat{d}^K, \hat{f}^K) - D_3'(t^K) + \langle (\hat{f}^K - \fbar)_+, t^* - \tbar\rangle \le \eps, \\
        \norm{(\hat{f}^K - \fbar)_+}_2 \le \frac{2 \eps}{R}\label{eq:cons_rate}.
    \end{gather*}
\end{theorem}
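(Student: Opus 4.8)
The plan is to place \eqref{eq:D3'} in the nonsmooth (Hölder exponent $0$) regime and run the standard primal--dual accelerated argument, with two twists: an inexact oracle, and — for stable dynamics — an \emph{infeasible} primal reconstruction. First I would record that by Lemma~\ref{lem:lipschitz} every inexact supergradient $\tilde\nabla\varphi_3$ has Euclidean norm at most $M=\sqrt{2H}N$, so $\Phi=\varphi_3$ is $M$-Lipschitz, and that Lemma~\ref{lem:delta-subgrad} shows the oracle obtained by solving \eqref{eq:E} to accuracy $\delta_k$ satisfies exactly the two-sided hypothesis of Lemma~\ref{lem:ustm}. Thus Lemma~\ref{lem:ustm} applies and, with $t^0=\tbar$ and $R=\norm*{t^*-\tbar}$, gives the dual-suboptimality bound $-D'_3(t^K)+D'_3(t^*)=\Phi(t^K)+h(t^K)-\min(\Phi+h)\le\frac{R^2}{2A_K}+\frac{3\eps}{4}$ and $\norm*{t^*-u^K}\le\norm*{t^*-t^0}=R$.

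Next I would control the growth of $A_K$. Using $M$-Lipschitzness and Young's inequality, the inner stopping test is met once $\alpha_{k+1}\lesssim\eps/M^2$; since $L$ is only doubled, this cannot be overshot by more than a factor $2$, so $\alpha_{k+1}\ge c\,\eps/M^2$ at each step (the hypothesis $L_0\le M^2/\eps$ fixes the first iteration). Summing yields $A_K\ge c\,K\eps/M^2$ — the linear-in-$K$ growth characteristic of the nonsmooth case — so $A_K\gtrsim R^2/\eps$ once $K\gtrsim(MR/\eps)^2$, with the constant $4$ calibrating the final gap to $\eps$. In the Beckmann model the equilibrium satisfies $t_e^*-\tbar_e=\tbar_e\rho(f_e^*/\fbar_e)^{1/\mu}$ with $f_e^*\le N$, whence $R\le\tilde R$; substituting the computable a priori bound $\tilde R$ gives $K=4(M\tilde R/\eps)^2$, whereas for stable dynamics the count stays in terms of $R$.

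For the primal--dual conversion I would substitute the $A_K^{-1}$-weighted averages \eqref{eq:corr_recover}--\eqref{eq:flow_recover} into the first bound of Lemma~\ref{lem:ustm}. The estimate function $\tfrac1{A_K}\phi_K(t)$ collects the linear lower models $\tilde\Phi(y^k)+\langle\tilde\nabla\Phi(y^k),\cdot-y^k\rangle$, whose $\alpha_k$-weighted sum reproduces $-\langle\hat f^K,t\rangle$; combining this with Jensen's inequality for the convex $H$ and $\Psi$ and the Fenchel--Young equality between $\sigma_e$ and $h=\sum_e\sigma_e^*$ converts the dual bound into one on $P_3(\hat d^K,\hat f^K)-D'_3(t^K)$. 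In the Beckmann case $\hat f^K=\Theta x$ for some $x\in X(\hat d^K)$ is automatically feasible, so this yields the clean estimate $0\le P_3(\hat d^K,\hat f^K)-D'_3(t^K)\le\eps$, the left inequality being weak duality.

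The main obstacle is the stable-dynamics case: the shortest-path oracle ignores capacities, so $\hat f^K$ may violate $\hat f^K_e\le\fbar_e$, making $\Psi$ with the hard constraint $+\infty$ and blocking a direct appeal to weak duality. The key fact I would exploit is that here $h(t)=\langle t-\tbar,\fbar\rangle$ is linear and $t^0=\tbar$, so the prox-step defining $u^K$ has the closed form $u^K_e=\tbar_e+A_K(\hat f^K_e-\fbar_e)_+$; hence $\norm*{u^K-\tbar}_2=A_K\norm*{(\hat f^K-\fbar)_+}_2$, and with $\norm*{u^K-t^*}\le R$ and the triangle inequality this gives $\norm*{(\hat f^K-\fbar)_+}_2\le 2R/A_K\le 2\eps/R$. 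Finally, evaluating the Lagrangian of \eqref{PrimalSD} at $t^*$ gives $D'_3(t^K)\le P_3^*\le P_3(\hat d^K,\hat f^K)+\langle(\hat f^K-\fbar)_+,\,t^*-\tbar\rangle$, where the nonnegative correction prices the infeasibility at the optimal multipliers; this supplies both the left inequality and, via $A_K\gtrsim R^2/\eps$, the upper bound $\eps$ for the corrected gap.
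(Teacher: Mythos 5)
Your scaffolding is the same as the paper's: Lemma~\ref{lem:lipschitz} for the $M$-bound, Lemma~\ref{lem:delta-subgrad} to certify the inexact oracle, Lemma~\ref{lem:ustm} plus $A_K \ge \frac{\eps K}{2M^2}$, and a primal--dual conversion of the estimate-sequence bound via Jensen and Fenchel--Young. Your stable dynamics half is essentially correct and in one place cleaner than the paper: the closed form $u^K_e = \tbar_e + A_K([\hat f^K]_e - \fbar_e)_+$, combined with $\norm{u^K - t^*}\le R$ and the triangle inequality, gives $\norm{(\hat f^K-\fbar)_+}_2 \le 2R/A_K \le \eps/R$ directly, whereas the paper obtains the weaker $2\eps/R$ by solving the quadratic inequality $\frac{K\eps}{4M^2}z^2 \le Rz + \frac{3\eps}{4}$ coming from the Lagrange-multiplier argument. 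One detail you gloss over: plugging $t=t^*$ into the estimate-sequence bound yields the gap corrected by $\langle \hat f^K - \fbar,\, t^*-\tbar\rangle$, which is \emph{smaller} than the correction $\langle (\hat f^K - \fbar)_+,\, t^*-\tbar\rangle$ in the statement; to close this you must either substitute the vector equal to $t^*$ on overloaded links and to $\tbar$ elsewhere, or, as the paper implicitly does, maximize $Rz - \frac{A_K}{2}z^2$ over the violation $z$.

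The genuine flaw is in the Beckmann case. You assert that the relevant radius is $R$ and that $\tilde R$ enters merely as a computable upper bound on $R$; that is not why $\tilde R$ appears, and the step fails as stated. The Fenchel--Young inequality $\sigma_e^*(t_e) - [\hat f^K]_e t_e \ge -\sigma_e([\hat f^K]_e)$ goes the \emph{wrong} way for every $t$ except $t_e = \tau_e([\hat f^K]_e)$, so the only substitution that converts the bound of Lemma~\ref{lem:ustm} into a bound on $P_3(\hat d^K,\hat f^K) - D_3'(t^K)$ is $t = \tau(\hat f^K)$, and the quadratic term one then inherits is $\frac{1}{2A_K}\norm{\tau(\hat f^K)-\tbar}_2^2$, not $\frac{R^2}{2A_K}$. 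There is no a priori reason why $\norm{\tau(\hat f^K)-\tbar}\le R$: the averaged flows $\hat f^K$ can overload links far beyond their equilibrium values, making $\tau_e([\hat f^K]_e)$ much larger than $t_e^*$. What rescues the argument --- and what the paper does --- is precisely your computation applied to $\hat f^K$ instead of $f^*$: since $[\hat f^K]_e \le N$, one has $\tau_e([\hat f^K]_e) - \tbar_e = \tbar_e\rho\left([\hat f^K]_e/\fbar_e\right)^{1/\mu} \le \tbar_e\rho\left(N/\fbar_e\right)^{1/\mu}$, hence $\norm{\tau(\hat f^K)-\tbar}_2 \le \tilde R$ and the gap is at most $\frac{\tilde R^2}{2A_K} + \frac{3\eps}{4} \le \eps$ after $K = 4(M\tilde R/\eps)^2$ iterations. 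In short, the theorem's Beckmann complexity is stated in terms of $\tilde R$ because $\tilde R$ bounds the distance to the \emph{forced} substitution point $\tau(\hat f^K)$, not because $R\le\tilde R$.
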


\begin{proof}
    By Lemma~\ref{lem:lipschitz},
    \[
    \tilde{\Phi}(t^{k+1}) \le \tilde{\Phi}(y^{k+1}) + \langle \tilde{\nabla} \Phi(y^{k+1}), t^{k+1} - y^{k+1} \rangle + M \norm{t^{k+1} - y^{k+1}}_2.
    \]
    Then f.~(A3) from the proof of Theorem~3 in \cite{kubentayeva2021sd_and_beckmann} ensures that for all $k$
    \[
    A_k \ge \frac{\eps k}{2 M^2} .
    \]

    Recall that $f = f^k$ are link flows by shortest paths for times $y = y^k$ and interzonal trips $d' = d'(y^k)$.
    Then, according to Subsection~\ref{sec:combined_distr_assignment},
    \[
    E(\tilde{d}(y), T(y)) = \sum_{i, j, r, a, m} d_{ij}^{ram} T_{ij}^m(y) + H(d') 
    = \sum_{e,m} f_e^m (y_e + c_e^m) + H(d') ,
    \]
    and for any $t$
    \begin{align*}
        \tilde{\varphi}_3(y) + \left\langle \tilde{\nabla}\varphi_3(y), t - y \right\rangle &= -E(\tilde{d}(y), T(y)) + \langle \nabla_t E(\tilde{d}(y), T(y)), t - y \rangle \\
        &= - \sum_{e,m} f_e^m (y_e + c_e^m) - H(d') + \sum_{e,m} f_e^m (y_e - t_e) \\
        &= - \sum_{e,m} f_e^m (t_e + c_e^m) - H(d').
    \end{align*}
    Therefore, due to the convexity of the entropy,
    \begin{align*}
        \phi_K(t) &= \sum_{k=1}^K \alpha_k \left(\tilde{\varphi}_3(y^k) + \left\langle \tilde{\nabla} \varphi_3(y^K), t - y^k \right\rangle + h(t)\right) + \frac{1}{2} \norm*{t - t^0}_2^2 \\
        &= - A_K \sum_{e,m} [\hat{f}^K]_e^m (t_e + c_e^m) - \sum_{k=1}^K \alpha_k H(d'(y^k)) + A_K h(t) + \frac{1}{2} \norm*{t - \tbar}_2^2 \\
        &\le A_K \sum_{e} \left(\sigma^*_e(t_e) - [\hat{f}^K]_e - \sum_m [\hat{f}^K]_e^m c_e^m\right) - A_K H(\hat{d}^K) + \frac{1}{2} \norm*{t - \tbar}_2^2.
    \end{align*}
    Then by Lemma~\ref{lem:ustm},
    \[
    -D_3'(t^K) \le \sum_{e} \left(\sigma^*_e(t_e) - [\hat{f}^K]_e t_e - \sum_m [\hat{f}^K]_e^m c_e^m\right) - H(\hat{d}^K) + \frac{1}{2 A_K} \norm*{t - \tbar}_2^2 + \frac{3 \eps}{4}.
    \]

    The rest of the proof repeats the proofs of Lemmata~1 and~2 in \cite{kubentayeva2021sd_and_beckmann}, mutatis mutandis.
    In the case of the Beckmann model, we substitute $t_e = \tau_e\left([\hat{f}^K]_e\right)$, what gives us the bound
    \begin{align*}
        -D_3'(t^K) &\le - \sum_{e} \left(\sigma_e\left([\hat{f}^K]_e\right) + \sum_m [\hat{f}^K]_e^m c_e^m\right) - H(\hat{d}^K) + \frac{\tilde{R}^2}{2 A_K} + \frac{3 \eps}{4} \\
        &= - \Psi(\hat{f}^K) - H(\hat{d}^K) + \frac{\tilde{R}^2}{2 A_K} + \frac{3 \eps}{4} 
        \le - P_3(\hat{d}^K, \hat{f}^K) + \frac{(M \tilde{R})^2}{K \eps} + \frac{3 \eps}{4}.
    \end{align*}
    
    In the case of the Stable Dynamics model,
    \begin{align*}
        -D_3'(t^K) &\le \min_{t \ge \tbar} \left\{\sum_{e} \left(\fbar_e (t_e - \tbar_e) - [\hat{f}^K]_e t_e - \sum_m [\hat{f}^K]_e^m c_e^m\right) + \frac{1}{2 A_K} \norm*{t - \tbar}_2^2\right\} \\
        &- H(\hat{d}^K) + \frac{3 \eps}{4} \\ 
        &= - \Psi(\hat{f}^K) - H(\hat{d}^K) + \min_{t \ge \tbar} \left(\langle \fbar - \hat{f}^K, t - \tbar\rangle + \frac{1}{2 A_K} \norm*{t - \tbar}_2^2\right) + \frac{3 \eps}{4} \\
        &= - P_3(\hat{d}^K, \hat{f}^K) - \frac{A_K}{2} \norm*{(\hat{f}^K - \fbar)_+}_2^2 + \frac{3 \eps}{4} \\
        &\le - P_3(\hat{d}^K, \hat{f}^K) - \frac{K \eps}{4 M^2} \norm*{(\hat{f}^K - \fbar)_+}_2^2 + \frac{3 \eps}{4}.
    \end{align*}
    Since optimal $t^* - \tbar \ge 0$ are Lagrange multipliers for the problem~\eqref{eq:P3},
    \[
    P_3(\hat{d}^K, \hat{f}^K) \ge P_3(d^*, f^*) - \langle (\hat{f}^K - \fbar)_+, t^* - \tbar\rangle
    = D_3'(t^*) - \langle (\hat{f}^K - \fbar)_+, t^* - \tbar\rangle ,
    \]
    and thus
    \[
    - \langle (\hat{f}^K - \fbar)_+, t^* - \tbar\rangle \le P_3(\hat{d}^K, \hat{f}^K) - D_3'(t^K) \le - \frac{K \eps}{4 M^2} \norm*{(\hat{f}^K - \fbar)_+}_2^2 + \frac{3 \eps}{4}.
    \]
    Therefore,
    \[
    \frac{K \eps}{4 M^2} \norm*{(\hat{f}^K - \fbar)_+}_2^2 \le R \norm*{(\hat{f}^K - \fbar)_+}_2 + \frac{3 \eps}{4}
    \]
    and, finally,
    \[
    \norm*{(\hat{f}^K - \fbar)_+}_2 \le \frac{4 M^2 R}{K \eps} + M \sqrt{\frac{3}{K}} ,
    \]
    what yields the result.
\end{proof}


\section{Frank--Wolfe Variations and USTM in Traffic Assignment}
\label{sec:tap_methods}
Here, we consider several numerical methods for solving a separate problem of searching user equilibrium with inelastic demands. The Frank--Wolfe method and its variations with different line search strategies effectively solve the Beckmann traffic assignment problem, but due to its primal nature it cannot be applied to the stable dynamics model. Meanwhile, the primal-dual USTM method can be applied to both problems. Further, we conduct the experiments for these methods.

\subsection{Frank--Wolfe Variations}
\label{subsec:fw_variations}
In the Beckmann model, searching equilibria reduces to minimization of the potential function (\ref{PrimalBeckmann}). One of the most popular and effective approaches to solve this problem numerically is the famous Frank--Wolfe method \citep{frank1956algorithm, jaggi2013revisiting} as well as its numerous modifications \citep{fukushima1984modified, leblanc1985improved, arezki1990full, chen2002faster}. Also, one can apply the primal-dual subgradient methods to optimize the dual problem and then reconstruct a solution to the primal one. However, our research \cite{kubentayeva2021sd_and_beckmann} showed that this approach demands more parameter adjustments to reach the Frank--Wolfe algorithm's performance with standard step size strategy.

In this paper, we test various step size strategies of Frank--Wolfe method. Namely, we consider some simple decaying step size schedules like standard choice of step size $\gamma_k = \frac{2}{k+1}$ and $\gamma_k = \frac{1}{k}$ leading to the averaging of $f^k$, and a number of approaches based on a choice of the optimal step size by solving auxiliary one-dimensional problem
\[
\gamma_k \coloneqq \argmin\limits_{\gamma \in [\gamma_{\min}, \gamma_{\max}]} \Psi((1 - \gamma) f^k + \gamma s^k).
\]
The variety of these approaches corresponds to the different one-dimensional optimization methods. We consider the Brent method on a segment $\gamma_k \in [0, 1]$ \cite{brent1971algorithm} and exponential decreasing of $\gamma_k$ until Armijo rule is satisfied \cite{armijo1966minimization}. The last modification considered is the backtracking line-search method developed for specific use in Frank--Wolfe algorithms proposed in \cite{pedregosa2020linearly}.

The Frank--Wolfe method's theoretical convergence rate for convex objective (with Lipschitz-continuous gradient) is $O(1/\eps)$ \citep{pedregosa2020linearly, jaggi2013revisiting}

\begin{algorithm}[h]
    \caption{Frank--Wolfe algorithm}
    \label{alg::frank-wolfe}
    \begin{algorithmic}[1]
    \REQUIRE accuracy $\eps > 0$
    \STATE $t^0 \coloneqq \tbar$, $f^0 \coloneqq \argmin\limits_{s \in \{\Theta x : x \in X\}} \langle t^0, s \rangle$, $k \coloneqq 0$
    \REPEAT
        \STATE $s^k \coloneqq \argmin\limits_{s \in \{\Theta x : x \in X\}} \langle t^k, s \rangle$, $t_e^k \coloneqq \frac{\partial \Psi (f^k)}{\partial f_e} = \tau_e(f^k)$ 
        \STATE $\gamma_k \coloneqq \frac{2}{k + 2}$, $f^{k + 1} \coloneqq (1 - \gamma_k) f^k + \gamma_k s^k$ 
        \STATE $k \coloneqq k + 1$
    \UNTIL{Stopping criterion is fulfilled}
    \end{algorithmic}
\end{algorithm}

\subsection{Primal-dual Universal Similar Triangles Method}
Let us remind that \eqref{DualBeckmann} and \eqref{DualSD} dual problems of Beckmann and stable dynamics traffic assignment models have the same structure:
\begin{equation*}
    Q(t) = \sum_{i,j} d_{ij} T_{ij}(t) - h(t) \longrightarrow \max_{t \ge \bar{t}}.  
\end{equation*}
The optimization problems are convex, non-smooth and composite. We apply the USTM method to minimize the composite objective $Q(t)$. Here, in the Algorithm~\ref{alg::univ_triangles}, we set $\Phi(t) \coloneqq \sum_{i,j} d_{ij} T_{ij}(t)$.
As in Section~\ref{sec:dual_for_ne_problem}, for both models, flows (primal variables) are reconstructed in the following way:
\begin{equation}
\label{eq:reconstruct_flows}
    \hat{f}^K = - \frac{1}{A_K} \sum_{k = 1}^{K} \alpha_k \nabla \Phi(y^k),
\end{equation}
where $\alpha_k$ is a coefficient of the USTM method on iteration $k$, and $A_K = \sum_{k = 1}^{K} \alpha_k$. Note that any element from the set $\sgr \Phi(t)$ has form $\nabla \Phi(t) = - f$, where $f = \Theta x$ is a flow distribution on links induced by $x \in X$ concentrated on the shortest paths for given times $t$ (and vice versa: any such $f$ corresponds to a subgradient of $\Phi(t)$). Hence, weighted $\hat{f}^K$ are also induced by flows on the paths.

For the Beckmann model, we can also use the duality gap to estimate the method's accuracy: 
\begin{equation*}\label{eq:UGM_stable_stop}
    \Delta^K = Q(t^K) + \Psi(\hat{f}^K).
\end{equation*}

For the stable dynamics model, flows reconstruction according to \eqref{eq:reconstruct_flows} keeps feasibility of $\hat{f}^K$ (i.e.\ they are induced by flows on the paths), but can violate the networks capacity constraints~--- so the duality gap $\Delta^K$ can be negative. To solve the SD traffic assignment problem with inelastic demand, \cite{kubentayeva2021sd_and_beckmann} proposed a novel way to reconstruct admissible flows~--- which also meet capacity constraints~-- together with a novel computable duality gap, which can be used in a stopping criterion.

The USTM method requires $O(1 / \eps^2)$ iterations to obtain an $\eps$-solution of primal and dual problems of Beckmann and SD models \citep{nesterov2015universal, kubentayeva2021sd_and_beckmann}.

\section{Sinkhorn's Variations for Trip Distribution}
\label{sec:sinkhorn_variations}

Optimization problem of entropy-based trip distribution model of \cite{wilson1969use} coincides with optimal transport (OT) problem with entropy regularizer \citep{cuturi2013sinkhorn}. To solve the problem, celebrated Sinkhorn’s algorithm is used (Subsection~\ref{sec:sink}).
In Subsections~\ref{sec:accsink} and~\ref{sec:mixed_agm}, we consider accelerated gradient methods adapted for solving OT problems. These methods achieve better theoretical convergence rates compared to Sinkhorn-like methods in some regimes. 
Later, in Subsection~\ref{sec:sinkhorn}, we conduct experiments to compare performances of the mentioned methods.

\subsection{Sinkhorn's Algorithm} \label{sec:sink}
In this section, for the sake of formulas simplicity, we assume a single agent type and travel mode. Since the problem \eqref{eq:varphi_3} is separable, without loss of generality, we consider only one trip purpose and suppose $\sum_{i,j} d_{ij} = 1$. Thus, equation \eqref{eq:varphi_3} takes form
\begin{equation} \label{eq:varphi_3:sink}
    \varphi(\lambda^l, \lambda^w) = \frac{1}{\gamma} \ln{\sum_{i,j}\exp\cbraces{-\gamma\cbraces{ T_{ij} + \lambda^l_{i} + \lambda^w_{j}}}} + \sum_{i} \lambda_{i}^l l_i + \sum_j \lambda_{j}^w w_j.
\end{equation}
Following \cite{guminov2021combination}, we perform a change of variables $\mu^l = -\gamma \lambda^l$, $\mu^w = -\gamma \lambda^w$ in \eqref{eq:varphi_3:sink} and obtain an equivalent formulation 
\begin{equation} \label{OT_dual}
    \phi(\mu^l, \mu^w) = \frac{1}{\gamma} \left[\ln\cbraces{\one^T d\cbraces{\mu^l,\mu^w}\one} - \la \mu^l,l \ra - \la \mu^w,w \ra\right] \to \min_{{\mu^l}, {\mu^w}},
\end{equation}
where
\begin{equation}
    \sbraces{d(\mu^l,\mu^w)}_{i, j} = \exp{\cbraces{\mu^l_i+\mu^w_j-\gamma T_{ij}(t)}},\label{eq::d_OT_dual}
\end{equation}
with the primal-dual coupling
\begin{equation}\label{eq:primal}
    \d = d(\mu^l, \mu^w) / \one^T d(\mu^l, \mu^w)\one.
\end{equation}

Similarly to the well-known Sinkhorn algorithm, the objective in~\eqref{OT_dual} can be alternatively minimized (see Algorithm~\ref{alg::sinkhorn}).

\begin{algorithm}[h]
\caption{Sinkhorn’s Algorithm (dual objective with the tautological constraint)}
\label{alg::sinkhorn}
\begin{algorithmic}[1]
    \REQUIRE $\mu^l_0 \coloneqq \mu^w_0 \coloneqq 0$
    \STATE $k \coloneqq 0$
    \REPEAT
    	\IF{$k$ mod $2=0$}
            \STATE $\mu^l_{k+1} \coloneqq \argmin_\xi \phi(\mu^l_{k}, \xi)$
            \STATE $\mu^w_{k+1} \coloneqq \mu^w_{k}$
        \ELSE
            \STATE $\mu^l_{k+1} \coloneqq \mu^l_{k}$
            \STATE $\mu^w_{k+1} \coloneqq \argmin_\xi \phi(\xi, \mu^w_{k})$
        \ENDIF
        \STATE $k \coloneqq k + 1$
    \UNTIL{Stopping criterion is fulfilled}
\end{algorithmic}
\end{algorithm}

Note that, according to Lemma~9 in \cite{guminov2021combination} for the problem~\eqref{OT_dual}, partial explicit minimization is possible via the same formulas as for classical entropy-regularized OT problem \cite{cuturi2013sinkhorn} without tautological constraint:
\begin{align} \label{OT_dual_cuturi}
    \psi(\mu^l, \mu^w) = \one^T d\cbraces{\mu^l,\mu^w}\one - \la \mu^l, l \ra - \la \mu^w, w \ra \to \min_{\mu^l, \mu^w} ,
\end{align}
but the primal-dual coupling formulas are different:~\eqref{eq:primal} for the problem~\eqref{OT_dual} and \eqref{eq::d_OT_dual} for the problem~\eqref{OT_dual_cuturi}.


The argminima of~\eqref{OT_dual}
should be implemented using numerically stable computation of the logarithm of the sum of exponents (logsumexp trick), but analytically the argminima are given by
\begin{align}
    &\ln\mu^l_{k+1} \coloneqq \ln\mu^l_{k}+\ln l-\ln\cbraces{d\cbraces{\mu^l_k, \mu^w_k} \one}, \\
    & \ln\mu^w_{k+1} \coloneqq \ln\mu^w_{k}+\ln w-\ln\cbraces{\one^T d\cbraces{\mu^l_k, \mu^w_k}},
\end{align}
where logarithm is taken element-wisely.

The authors of~\cite{gasnikov2015universal} pointed out that the objective~\eqref{OT_dual}, its gradient 
\begin{align}\label{eq:phi_grad}
    &\nabla_{\mu^l}\phi(\mu) = \partial \phi(\mu^l, \mu^w) /\partial \mu^l = \frac{1}{\gamma}\left(\frac{d\cbraces{\mu^l, \mu^w}\one}{\one^T d\cbraces{\mu^l, \mu^w}\one} -l\right), \\
    &\nabla_{\mu^w}\phi(\mu) = \partial \phi(\mu^l, \mu^w) /\partial \mu^w = \frac{1}{\gamma}\left(\frac{d\cbraces{\mu^l, \mu^w}^T\one}{\one^T d\cbraces{\mu^l, \mu^w}\one} - w\right),
\end{align}
and equation~\eqref{eq::d_OT_dual} are invariant under transformations 
\begin{align} \label{sink-shift-1}
    &\mu^l\to \mu^l+t_{\mu^l}\one \\
    &\mu^w\to \mu^w+t_{\mu^w}\one, \label{sink-shift-2}
\end{align}
with $t_{\mu^l},t_{\mu^w}\in\mathbb{R}$. That leads to better numerical stability.
In our experiments, we present a variant of Algorithm~\ref{alg::sinkhorn} (labeled as SINKHORN-TAUT-SHIFT) with such invariant transformations, that provide maximum of the dual variables equals $1$, and with numerically stable computations of the logarithm of the sum of exponents. 

\subsection{Accelerated Sinkhorn's Algorithm} \label{sec:accsink}

Besides the Sinkhorn's algorithm, accelerated gradient methods are adapted for solving OT problems. These methods achieve better theoretical convergence rates compared to Sinkhorn-like methods in some regimes. To the best available knowledge, the first such method was proposed in~\cite{gasnikov2016efficient}, where the authors proposed non-adaptive Accelerated Gradient Descent (AGD) method for a more general class of entropy-linear programming problems.
The algorithmic idea is to run AGD for solving \eqref{OT_dual} and equip it with some primal updates to guarantee the convergence rate also for the primal problem.

In this subsection, Algorithm~1 and Algorithm~2 (its adaptation for  trip distribution problem listed as Algorithm \ref{agm-nonpd}) from \cite{guminov2021combination} are described. The authors proposed to replace in the classical AGD methods the gradient step with a step of explicit minimization w.r.t.\ one of the blocks of variables. To formalize the latter, suppose that the vector of dual variables can be divided into $m$ block s.t.\ $\mu = \left(\mu_1^T, \dots, \mu_m^T\right)^T$. So that, notations $\phi(\mu)$ and $\phi(\mu_1, \dots, \mu_m)$ are equivalent. And suppose that it is possible to minimize the dual objective~\eqref{OT_dual} over $i$-th block holding the others variables fixed:
\begin{equation}
    \argmin_i \phi(\mu) \coloneqq \argmin_{\xi} \phi(\mu_1, \dots, \mu_{i-1},\xi,\mu_{i+1}, \cdots, \mu_m).
\end{equation}
Introduce also a notation for block gradient
\begin{equation}
    \nabla_i \phi(\mu) = \frac{\partial \phi(\mu_1, \dots, \mu_{i-1},\xi,\mu_{i+1}, \cdots, \mu_m)}{\partial \xi}.
\end{equation}

The resulting algorithms $m$ times theoretically slower than its gradient counterpart, where $m$ is the number of blocks of variables used in alternating minimization. But in practice the algorithms work faster \cite{guminov2021combination}.

\begin{algorithm}[h]
\caption{AGM-NONPD}
\label{agm-nonpd}
\begin{algorithmic}[1]
    \STATE Set $A_0 \coloneqq a_0 \coloneqq 0$, $\eta_0 \coloneqq \zeta_0 \coloneqq \kappa_0$.
    \STATE $k \coloneqq 0$
    \REPEAT
    	\STATE Set $\beta_k \coloneqq \argmin\limits_{\beta\in [0,1]} \phi\cbraces{\eta_k + \beta \cbraces{\zeta_k - \eta_k}}$ 
    	\STATE Set $\kappa_k \coloneqq \eta_k + \beta_k (\zeta_k - \eta_k)\quad $\COMMENT{Extrapolation step}

        \STATE Choose $i=\argmax\limits_{i\in\{1, \dots, m\}} \|\nabla_i \phi(\kappa_k)\|_2^2$
        \STATE Set $\eta_{k+1}=\argmin\limits_{i} \phi(\kappa_k)$
        
        \STATE Find largest $a_{k+1}$, $A_{k+1} \coloneqq A_k+a_k$ from 
        \[\phi(\kappa_k)-\frac{a_{k+1}^2}{2(A_k+a_{k+1})}\|\nabla
        \phi(\kappa_k)\|^2\coloneqq\phi(\eta_{k+1})\]\\
        \STATE Set $\zeta_{k+1} = \zeta_{k}-a_{k+1}\nabla \phi(\kappa_k)$\quad \COMMENT{Update momentum term}
        \STATE $k \coloneqq k + 1$
    \UNTIL{Stopping criterion is fulfilled}
\end{algorithmic}
\end{algorithm}
In practice variable transformations (\ref{sink-shift-1},~\ref{sink-shift-2}) (with $t_\xi = -\|\xi\|_\infty$, $\xi\in \{\mu^l,\mu^w\}$) performed after steps 5 and 7 of Algorithm~\ref{agm-nonpd} and can lead to a better numerical stability when $\gamma$ is big. 

For Algorithm~2 from \cite{guminov2021combination}  constraints residual $\|((d\one - l)^T, (\one^Td -w)^T)^T\|_2 = \widetilde O \left(\frac{1}{k^{2}}\right)$, but in our experiment it was observed that constraints residual decrease faster for $d=d(\kappa_{k})$ \eqref{eq:primal} than for the theoretically obtained primal variable $d$ using primal-dual property of the algorithm. We present experiments only on the best performing modifications with $d=d(\kappa_{k})$ primal variable reconstruction, labeled as NONPD (since it does not utilizes primal-dual property of the algorithms considered in this subsection).

According to~\cite[Theorem~3]{guminov2021combination}
the objective of the form~\eqref{OT_dual} can be minimized with the following rate
\begin{equation*}
    \phi(\kappa_k)-\phi^* = \widetilde O \left(\frac{\gamma^{1/2}\|T\|_\infty}{{k^{2}}}\right).
\end{equation*}



\subsection{MIXED AGM}
\label{sec:mixed_agm}
One more natural modification of Algorithm~\ref{agm-nonpd} can be obtained by performing several steps of explicit minimization instead of one. The natural number of steps seems to be equal to the number of blocks $m$. But the proof of Algorithm~\ref{agm-nonpd} utilizes the following property of a step explicit minimization 
\begin{align*}
    \phi(\eta^{k+1})\le \phi(\mu^k)-\frac{1}{2L}\|\nabla_{i^k} \phi(\mu^k)\|_2^2
\end{align*}
in order to obtain 
\begin{align} \label{eq:proof-base}
    \phi(\eta^{k+1})    \le \phi(\mu^k)-\frac{1}{2nL}\|\nabla \phi(\mu^k)\|_2^2. 
\end{align}
The latter is true since $i^k=\argmax\limits_{i\in\{1, \dots, m\}} \|\nabla_{i^k} \phi(\mu^k)\|_2^2$.

But the inequality~\eqref{eq:proof-base} can be satisfied if one replaces lines 6 and 7 in Algorithm~\ref{agm-nonpd} with the following Algorithm~\ref{steps}.

\begin{algorithm}[h]
    \caption{$k$-th step}
    \label{steps}
    \begin{algorithmic}[1]
        \STATE $\zeta^0 = \mu_k$.
       \WHILE{$j \le m$ and  $\|\nabla_{i^k} \phi(\mu^k)\|_2^2 \le \sum_j \|\nabla \phi(\zeta^j)\|_2^2$}
            \STATE Choose $i^j=\argmax\limits_{i\in\{1, \dots, m\}} \|\nabla_{i^j} \phi(\zeta_j)\|_2^2$
            \STATE Set $\zeta^{j+1}=\argmin\limits_{i} \phi(\mu^k)$
            \STATE $j = j + 1$
        \ENDWHILE
    \ENSURE $\eta^{k+1} = \zeta^j$ 
    \end{algorithmic}
\end{algorithm}

Despite the practical performance, this modification has no theoretical guarantees because $\|\nabla_{i^k} \phi(\mu^k)\|_2^2$ can be greater than $\sum^J_j \|\nabla \phi(\zeta^j)\|_2^2$ for any $J > m$.

Moreover, Algorithm~\ref{agm-nonpd} is non-increasing. But non-increasing property of the algorithm can be violated (with either the exact minimization given by lines 6 and 7 of Algorithm~\ref{agm-nonpd} replaced with Algorithm~\ref{steps} or not) due to numerical instabilities. Once it happened, Algorithm~\ref{agm-nonpd} is stopped. The computations can be proceeded from the last obtained $\eta^k$ with Sinkhorn's iterations. In fact, these numerical instabilities break monotonicity of Sinkhorn's iterations too, but in practice the proceeding of computations with Sinkhorn's iterations allows to find better minima. 

The modification, named MIXED-AGM-NONPD, combines  Sinkhorn's iterations after reaching the stability limit and the exact minimization given by Algorithm~\ref{steps}.

\subsection{Reconstruction of correspondence matrix}\label{sec:reconstruct_primal}

Finally, let us discuss a reconstruction of a solution to the primal problem~\eqref{eq:E}. 
Assume we reconstruct a solution $d_k$ of the primal problem~\eqref{eq:E} by formula~\eqref{eq:primal} with $\mu^l = \mu^l_{k}$, $\mu^w = \mu^w_{k}$. However, since the dual problem is only approximately solved, $d^k$ in general does not satisfy the marginal constraints. So obtain a feasible solution, one can use projection Algorithm~2 from~\cite{altschuler2017near-linear}. According to \citep[Theorem~4]{altschuler2017near-linear}, it has complexity $O(|O| \cdot |D|)$ and returns a correspondence matrix $\hat{d}_k \in \Pi(l, w)$ such that
\[
\norm{d_k - \hat{d}_k}_1 \le \delta_k = \norm{d_k \one - l}_1 + \norm{d_k^T \one - w}_1 .
\]
The error can be estimated following Theorem~8 in \cite{stonyakin2019gradient}:
\begin{equation}\label{eq:primal_error_ent}
    E(\hat{d}_k, T) \le \min_{d \in \Pi(l, w)} E(d, T) + 2 \delta_k \norm{T}_\infty + \frac{4 \delta_k}{\gamma} \log\left(\frac{|O| \cdot |D|}{\delta_k}\right) .
\end{equation}

Consider $\mu^l_k$, $\mu^w_k$ obtained with Sinkhorn's algorithm. Using~\eqref{eq:phi_grad} and the convexity of $\phi$ we get
\begin{align*}
    \phi(\mu^l_k, \mu^w_k) - \phi^* &\le \langle \mu^l_k - \mu^l_*, \nabla_{\mu^l} \phi(\mu^l_k, \mu^w_k)\rangle + \langle \mu^w_k - \mu^w_*, \nabla_{\mu^w} \phi(\mu^l_k, \mu^w_k)\rangle \\
    &= \frac{1}{\gamma} \left(\langle \mu^l_k - \mu^l_*, d_k \one - l\rangle + \langle \mu^w_k - \mu^w_*, d_k^T \one - w\rangle\right),
\end{align*}
where $(\mu^l_*, \mu^w_*)$ is the solution of~\eqref{eq::d_OT_dual}.
Then Lemma~3 and Theorem~7 from \cite{stonyakin2019gradient} ensure that
\[
\phi(\mu^l_k, \mu^w_k) - \phi(\mu^l_*, \mu^w_*) \le \frac{1}{2} \norm{T}_\infty \left(\norm{d_k \one - l}_1 + \norm{d_k^T \one - w}_1\right).
\]
Combining the above bounds, we obtain the following bound on the duality gap:
\begin{equation}\label{eq:dual_gap_ent}
    E(\hat{d}_k, T) + \phi(\mu^l_k, \mu^w_k) \le \frac{5}{2} \delta_k \norm{T}_\infty + \frac{4 \delta_k}{\gamma} \log\left(\frac{|O| \cdot |D|}{\delta_k}\right) .
\end{equation}

\section{Numerical experiments}
\label{sec:experiments}
In our experiments, we consider the morning peak-hour in Moscow transportation network. The city's data are provided by Russian University of Transport.

The city and its suburbs are split into 1420 zones. Moscow road network consists of 12970 nodes and 36905 links, a part of it is visualized on Figure~\ref{fig:msc-network}.
We model the crossings by inserting auxiliary links for each allowed turn between road links.
Resulting graph contains 63073 nodes and 94546 links.
\begin{figure}
    \centering
    \includegraphics[width=0.9\textwidth]{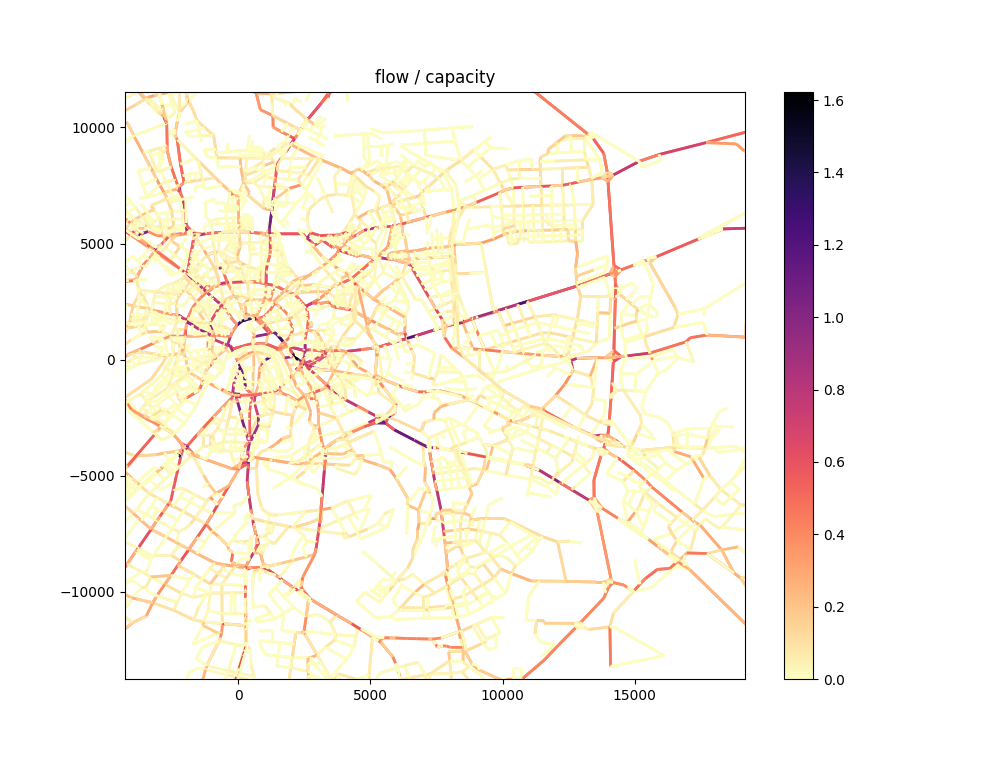}
    \caption{Moscow network link loads, obtained for the combined Beckmann model}
    \label{fig:msc-network}
\end{figure}

In our four-stage model of Moscow we consider 
\begin{itemize}
    \item  two demand layers: home-to-work, and home-to-others; 
    \item two agent types: car owners and non-car-owners; 
    \item and three travel modes: public transport, pedestrian and car. 
\end{itemize}
\vspace{-\topsep}

\subsection{Parallel computing} 
Calculation of flows $f$ is the most expensive part, since we have to find the shortest paths for all pairs $w \in OD$. We use Dijkstra's algorithm \citep{dijkstra1959note} to find the shortest paths, which runs in $O(|\mathcal{E}| + |\mathcal{V}| \log |\mathcal{V}|)$ time; Given the shortest paths tree, flows aggregation have linear performance $O(|\mathcal{V}|)$. Hence, the total complexity of flows calculation is $O\bigl(|O| (|\mathcal{E}| + |\mathcal{V}| \log |\mathcal{V}|)\bigr)$. 
Moreover, flows reconstruction for every source $o \in O$ can be computed in parallel. 
Table~\ref{tab:cpu-par} shows the result of running 100 iterations of the Frank--Wolfe method on Moscow road network with the various number of cores involved (processor's speed is 3092,72 MHz).
\begin{table}[h!]
    \centering
    \begin{tabular}{l l l l l l l}
        \toprule
        \textbf{\# cores} & \textbf{1} & \textbf{4}  & \textbf{8} & \textbf{16} & \textbf{24} &\textbf{32} \\
        \midrule
        Total time (sec)                &2794          &810      &470      &335      &293      &274  \\
        \bottomrule
    \end{tabular}
    \caption{Effect of CPU parallelism}
    \label{tab:cpu-par}
\end{table}

\subsection{Frank--Wolfe Algorithm's Variations}
Each of the considered modifications of the Frank--Wolfe algorithm was run up to 2000 iterations for the traffic assignment task of the classic four-stage model for the Moscow road network. The results are shown in Figure \ref{fig:fw-variations}. 

\begin{figure}[H]
    \centering
    \includegraphics[width=0.8\linewidth]{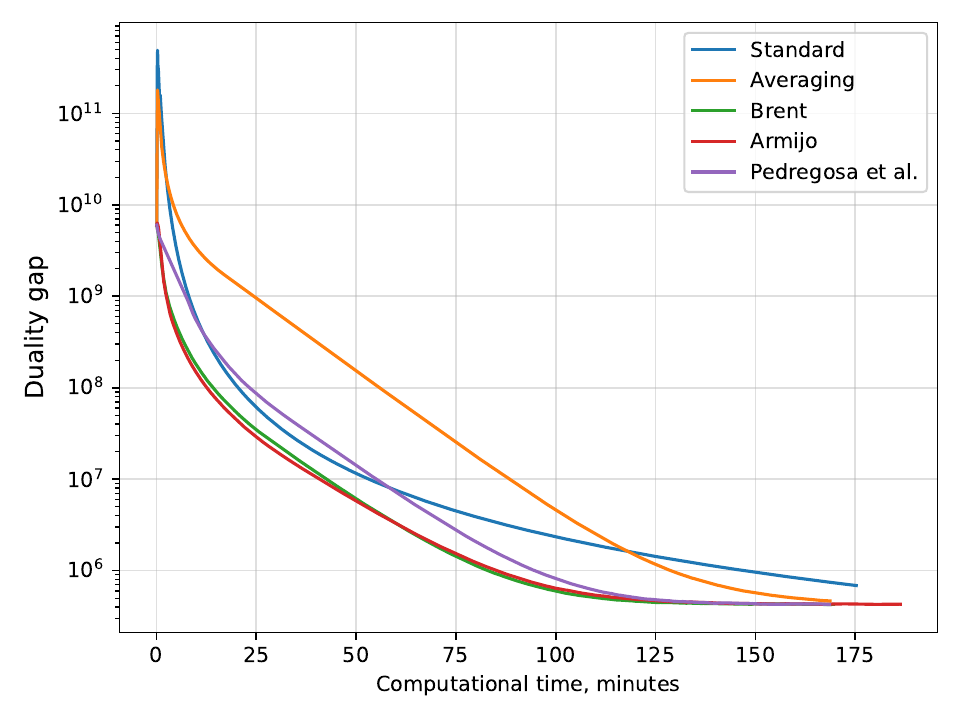}
    \caption{Convergence rate for the different Frank--Wolfe modifications for Moscow network.}
    \label{fig:fw-variations}
\end{figure}

\subsection{Sinkhorn Algorithm's Variations}\label{sec:sinkhorn}
Experiments were run for the Trip distribution stage with dual function adjustment for gradient methods described in Section \ref{sec:sinkhorn_variations} for the Moscow road network.
The results are shown in Figure
\ref{fig:constraint violation}.

Different formulations of minimized targets for Sinkhorn's method were considered (for example, formulation \eqref{OT_dual} or \eqref{OT_dual_cuturi}), but conceptual differences were not identified, therefore only \eqref{OT_dual} formulation is shown as SINKHORN-TAUT-SHIFT. Label AAM-NONPD corresponds~\cite[Algorithm~3]{guminov2021combination}, that can be easily adapted similarly as Algorithm~\ref{agm-nonpd} was adapted from \cite[Algorithm~1]{guminov2021combination}. One should note that utilized Sinkhorn's variation has comparable to gradient methods convergence rate, hence common approach is suitable for solving Trip Distribution problem.

\begin{figure}[h]
    \begin{subfigure}{0.5\textwidth}
        \includegraphics[width=0.9\linewidth, height=6cm]{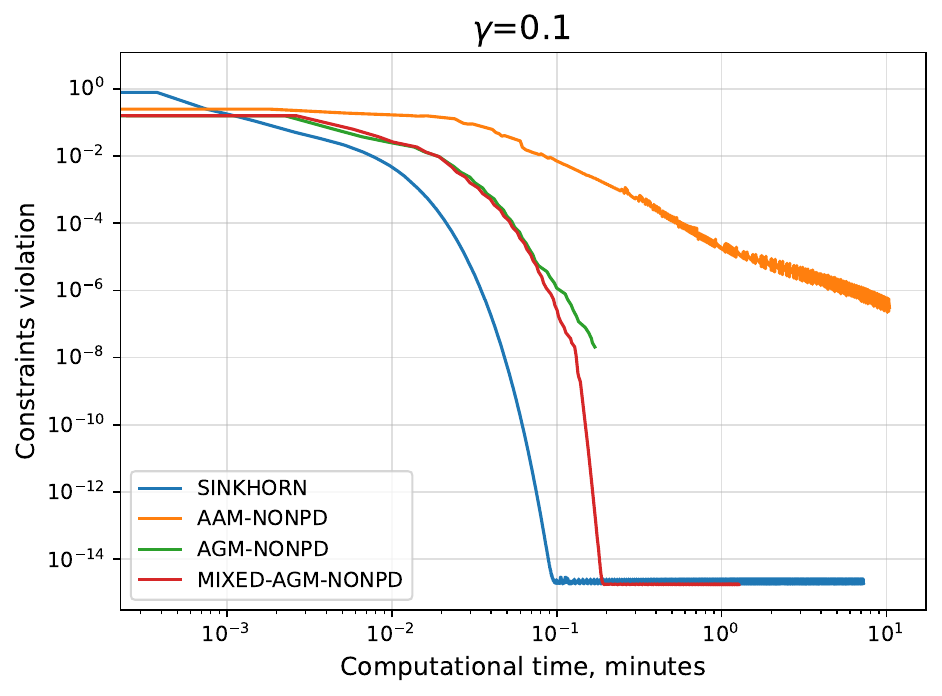}
        \label{fig:0.1}
    \end{subfigure}
    \begin{subfigure}{0.5\textwidth}  
        \includegraphics[width=0.9\linewidth, height=6cm]{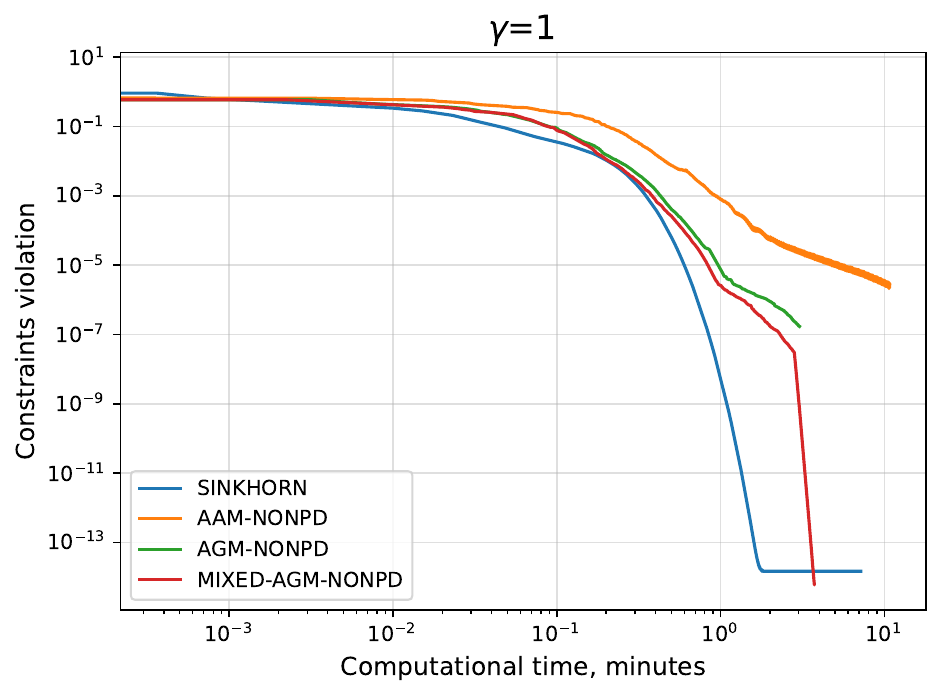}
        \label{fig:1}
    \end{subfigure}
\end{figure}
\begin{figure}[h]\ContinuedFloat
    \begin{subfigure}{0.5\textwidth}  
        \includegraphics[width=0.9\linewidth, height=6cm]{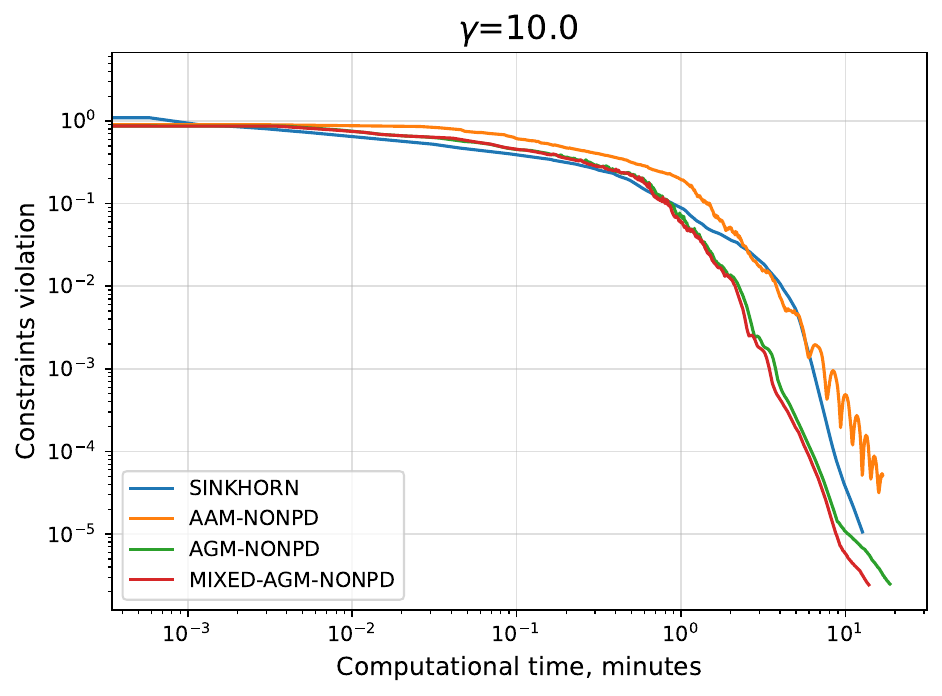}
        \label{fig:10.0}
    \end{subfigure}
    \begin{subfigure}{0.5\textwidth}  
        \includegraphics[width=0.9\linewidth, height=6cm]{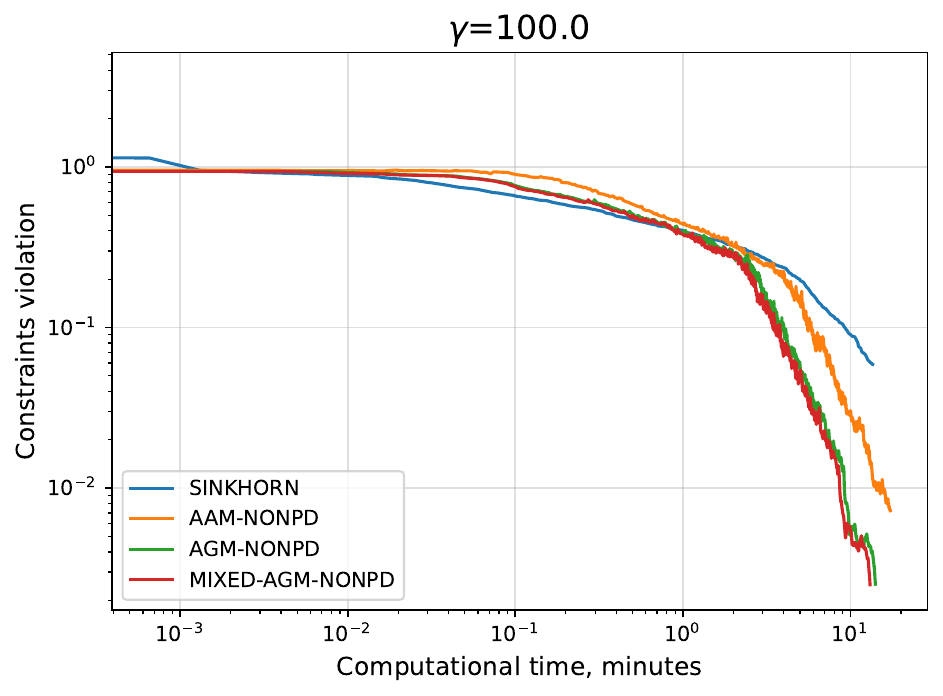}
        \label{fig:100.0}
    \end{subfigure}
    \caption{Sinkhorn's algorithm modifications for the Trip Distribution stage.}
    \label{fig:constraint violation}
\end{figure}

\subsection{Combined Model, Beckmann}\label{sec:exp:combined}
Here we compare three algorithms for finding a fixed-point of the four-stage Beckmann traffic model, 
namely Four-stage procedure, Evans algorithm and our dual approach via USTM.
The difference with the two-stage model is the addition of mode split and mode cost averaging steps. Mode split step usually cause wobbling between public transport and car modes when using straightforward Four-stage procedure:
if the road network is free at the first iteration agents start alternating between these two modes at each iteration. 
So we applied exponential averaging of modes cost matrices to handle this problem: 
$T^m_{ij}[k+1] = \frac12 \left(T^m_{ij}[\text{new}] + T^m_{ij}[k]\right)$.

Figure~\ref{fig:4stage-dgaps}
shows the convergence of the duality gap for all three algorithms considered. 
It can be seen that the Four-stage procedure does not tend to converge to zero duality gap:
after 5-6 iterations (about 70 minutes) it reaches its lower value of the duality gap, then it starts to fluctuate around this value.
In order to increase the accuracy of the approximate solution found by Four-stage procedure, one has to increase the number of inner iterations, which will make each outer iteration slower.

In contrast, Evans method steadily converge to zero duality gap.

\begin{figure}[h]
    \centering
    \includegraphics[width=0.8\linewidth]{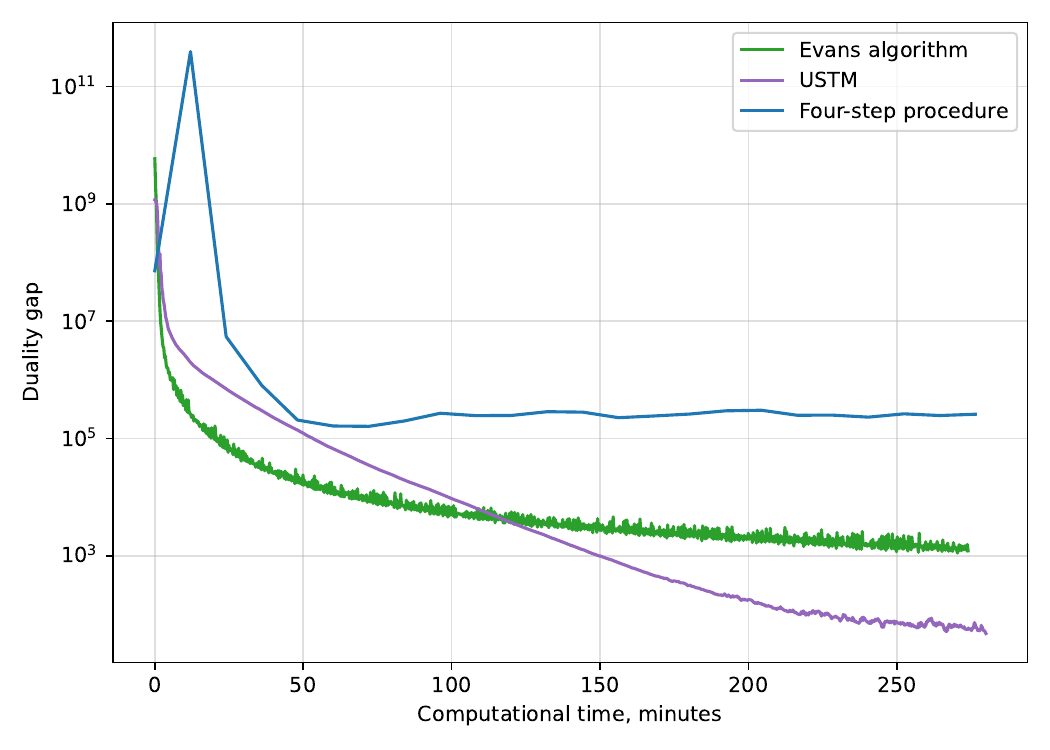}
    \caption{Duality gap convergence}
    \label{fig:4stage-dgaps}
\end{figure}

Some intuition about the behavior of the methods can be given by the Figure \ref{fig:4stage-MDS}, where two-dimensional projections of $d^m_{ij}$ trajectories are depicted. The projections were made by multidimensional scaling method, which tries to preserve pairwise distances while matching points from high-dimensional space (in our case~--- correspondence matrices) to points on the plane.
As one can see, the trajectories start from the same point, since the calculation of the correspondence matrices and the modal splitting in both methods is the same.
After a few iterations the trajectories of the methods are in the same region again, but the Evans method proceeds with small steps, while the Four-stage procedure makes long jumps  around the point to which Evans method converges.
The trajectory of USTM is similar to the trajectory of the Evans algorithm and is omitted for the sake of readability.

\begin{figure}[h]
    \centering
    \includegraphics[width=0.8\textwidth]{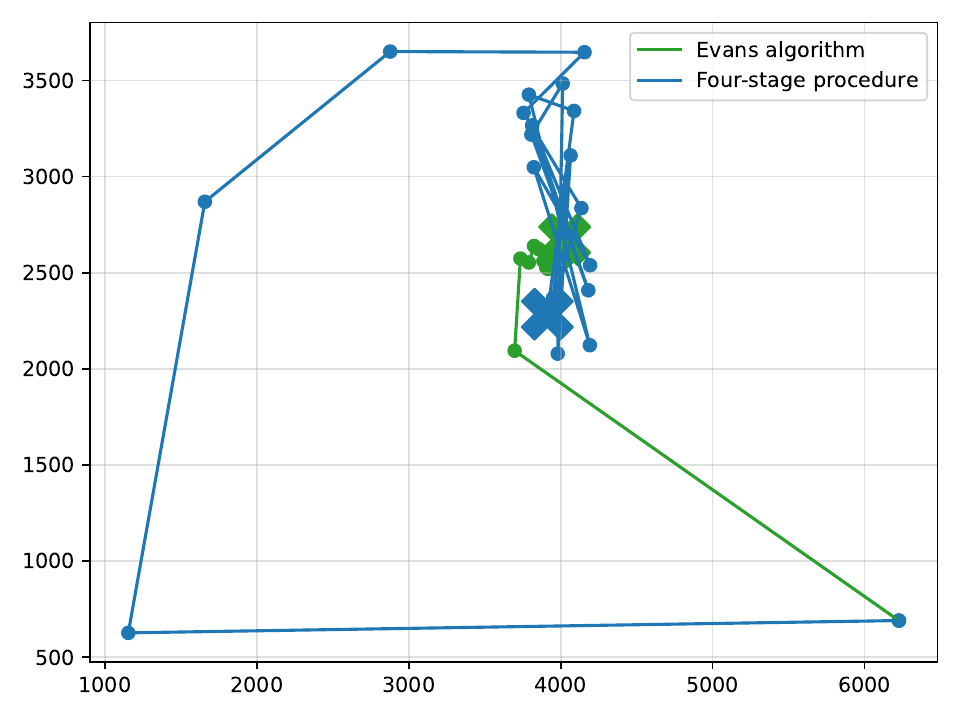}
    \caption{2-Dimensional projections of $d^m_{ij}$ trajectories for the Evans algorithm and the Four-stage procedure, obtained by multidimensional scaling. The trajectory of the Evans method is sparsified to 50 points. The last point is marked with a large cross}
    \label{fig:4stage-MDS}
\end{figure}

\subsection{Combined model, Stable Dynamics}
Here we compare the results obtained for the Beckman and the Stable Dynamics models on the Moscow city transportation model. 
We use the USTM algorithm to search for the equilibria because other algorithms are not applicable since the link travel times are not functions of the link flows in the Stable Dynamics model.

We used the same Moscow network as in previous experiment, but, since Stable Dynamics model is usually infeasible for peak-hours correspondences, we divided the peak-hour departures $l_i^{ra}$ and arrivals $w_j^r$ by two.

Convergence trajectories for Stable Dynamics model are shown in Figure~\ref{fig:sd-convergence}.
We discuss convergence of Beckmann model in more representative case of peak-hour departures and arrivals in Subsection~\ref{sec:exp:combined}, therefore convergence trajectories for Beckmann model are omitted in this subsection.

We asses the convergence by monitoring two values: constraints violation and function suboptimality.
Since the dual approach allows the flows to exceed the link capacities, the primal variables stay outside of the feasible region, thus the duality gap could be negative,  as shown in Figure~\ref{fig:sd-dgap}.
Then duality gap is negative, the objective function value at that approximate solution (of the minimization problem) is less than the optimal function value, but the approximate solution is infeasible.

The comparison of the approximate solutions is given in Figure \ref{fig:sd-vs-beck}.
It is evident that Beckmann's model is more likely to exceed the link capacity.
Figure \ref{fig:sd-vs-beck:time} shows that the travel time on some links in Stable Dynamics model exceeds the free-flow time by several hundred times.
This implies that some zones are connected to the rest of the network only by low-capacity links, leading to huge traffic congestion at equilibrium.
This result is likely due to inaccuracies in the input data, 
but if not, these bottleneck links should be prioritized in the transportation network improvement process.

\begin{figure}[h]
\begin{subfigure}{0.5\textwidth}
\includegraphics[width=0.9\linewidth, height=6cm]{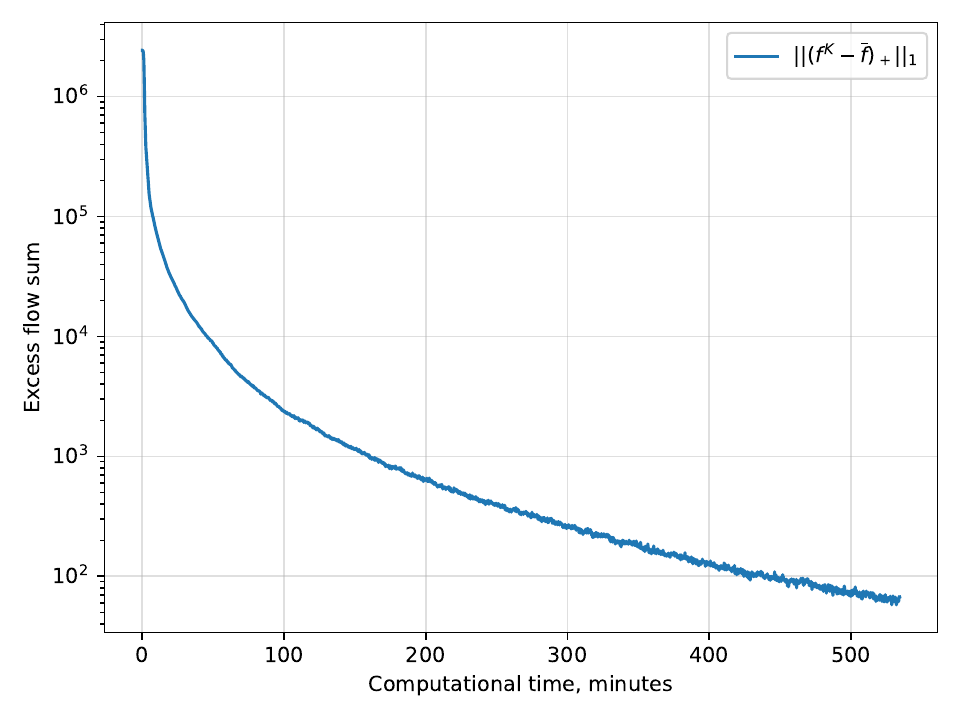}
\caption{}
\label{fig:sd-cons}
\end{subfigure}
\begin{subfigure}{0.5\textwidth}
\includegraphics[width=0.9\linewidth, height=6cm]{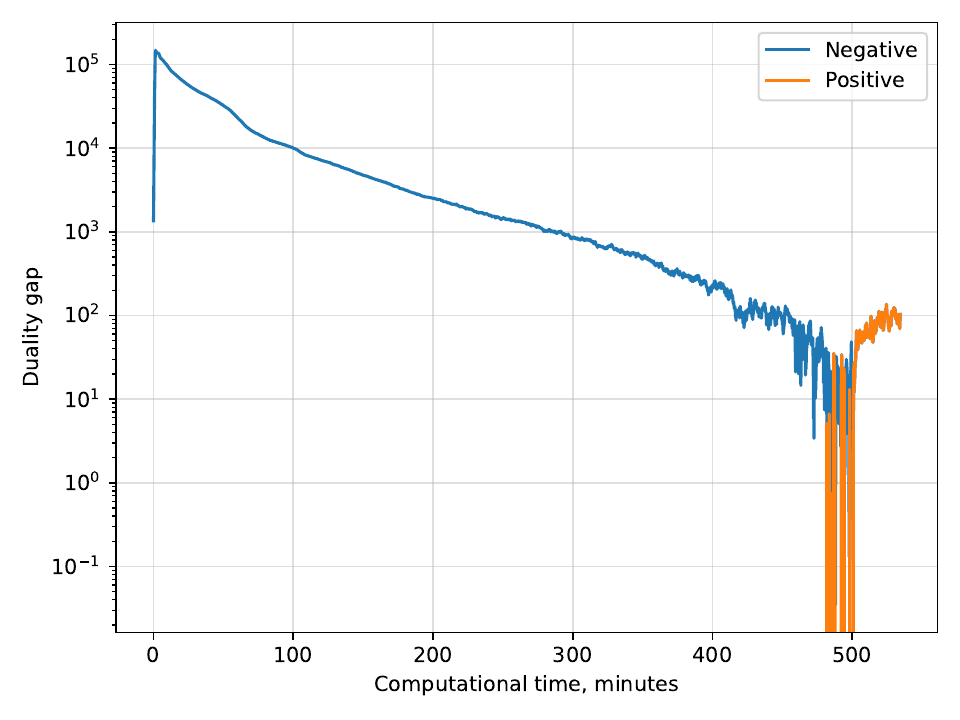} 
\caption{}
\label{fig:sd-dgap}
\end{subfigure}
\caption{Convergence of USTM on the Stable Dynamics model: a) total flow above the link capacity limits, b) absolute value of the duality gap, the sign is marked by color}
\label{fig:sd-convergence}
\end{figure}

\begin{figure}[h]
\begin{subfigure}{0.5\textwidth}
\includegraphics[width=0.9\linewidth, height=6cm]{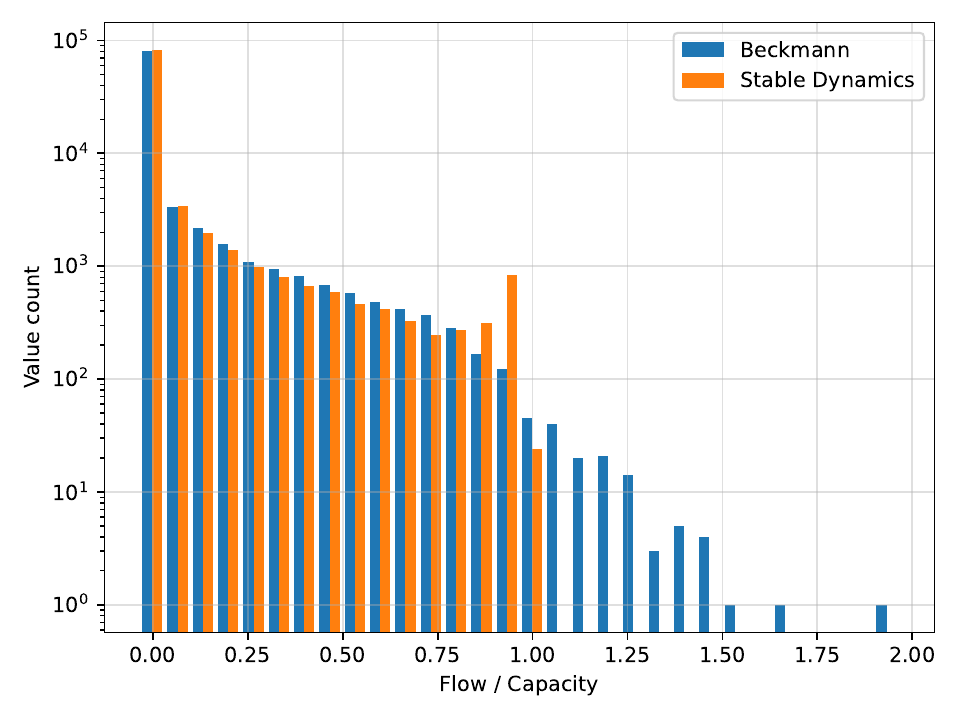} 
\caption{}
\label{fig:sd-vs-beck:load}
\end{subfigure}
\begin{subfigure}{0.5\textwidth}
\includegraphics[width=0.9\linewidth, height=6cm]{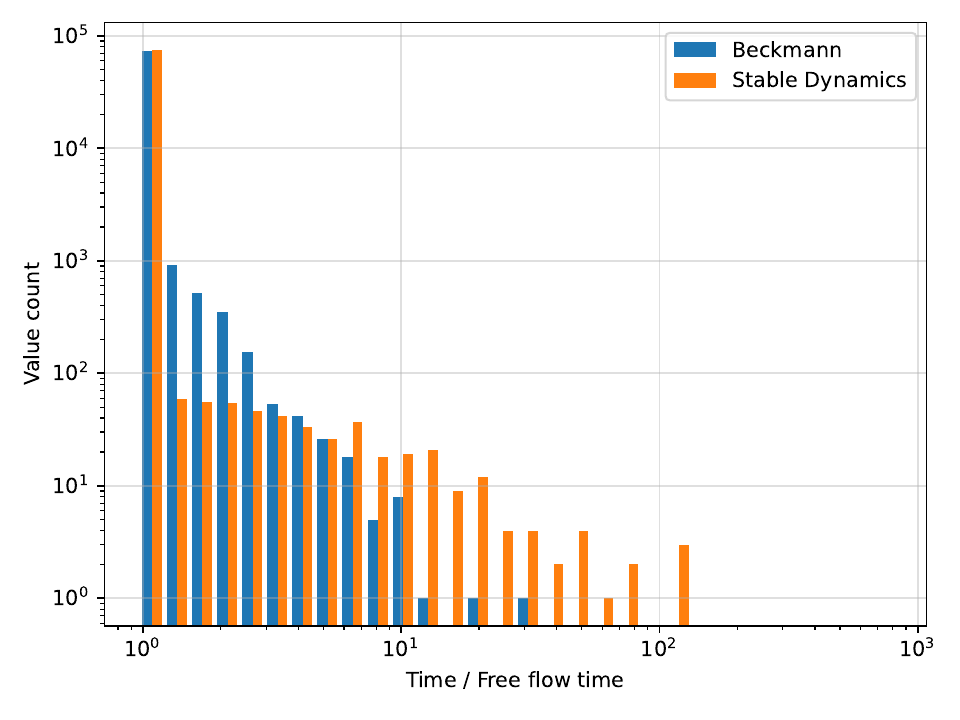}
\caption{}
\label{fig:sd-vs-beck:time}
\end{subfigure}
\caption{Histograms of the network load: a) histogram of the ratio of the amount of flow on the link to its capacity, b) histogram of the ratio of the travel time on the link to the travel time on the same link when it is free}
\label{fig:sd-vs-beck}
\end{figure}

\subsection{Traffic Assignment Model: Frank--Wolfe vs USTM for Beckmann model}
Experiments were conducted for single trip purpose, agent type and travel mode (by car) for the Berlin-Center network split into 865 zones with 12981 nodes and 28376 links (for more details see \cite{bstabler}). As it was shown in the article \cite{kubentayeva2021sd_and_beckmann}, performance of the USTM method is better than UGD \citep{nesterov2015universal} and other variations of accelerated gradient descent, thus only USTM and conventional Frank--Wolfe methods are considered. Convergence by primal function and duality gap is presented in Figure~\ref{fig:beck-ustm}. It is necessary to emphasize that the bigger $\varepsilon$, the faster USTM converges to $\varepsilon$ accuracy and oscillates. Thereby, it makes sense to use restarting technique for faster convergence~--- run method with $\varepsilon'$ and then with final desired accuracy $\varepsilon < \varepsilon'$.

\begin{figure}[h]
\begin{subfigure}{0.5\textwidth}
\includegraphics[width=0.9\linewidth, height=6cm]{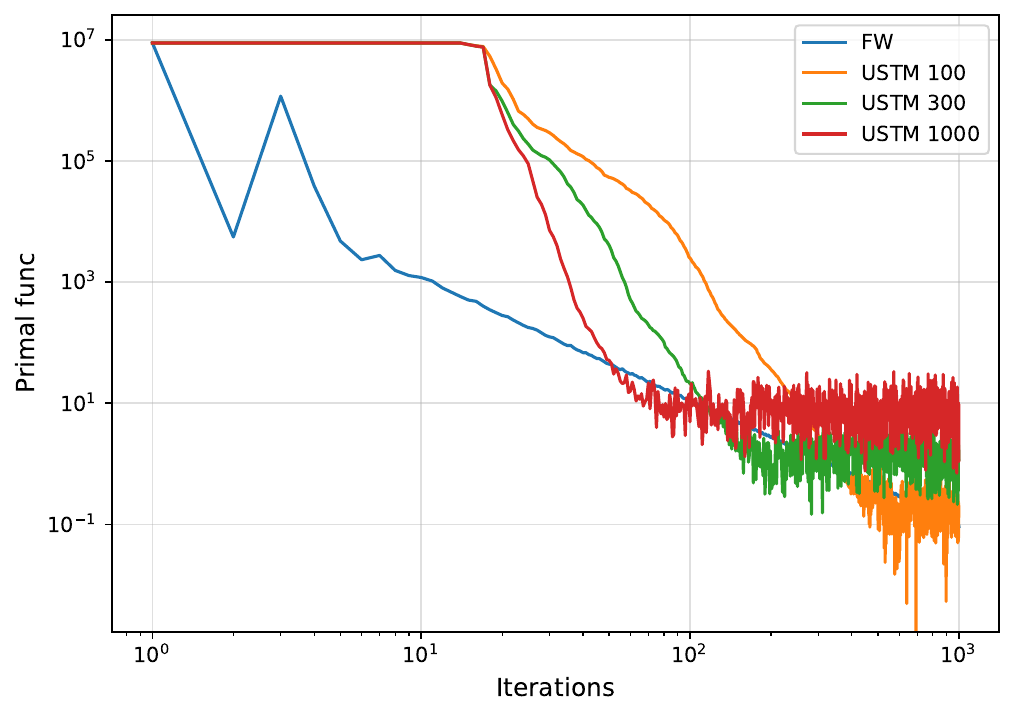}
\caption{}
\label{fig:beck-ustm:primal}
\end{subfigure}
\begin{subfigure}{0.5\textwidth}
\includegraphics[width=0.9\linewidth, height=6cm]{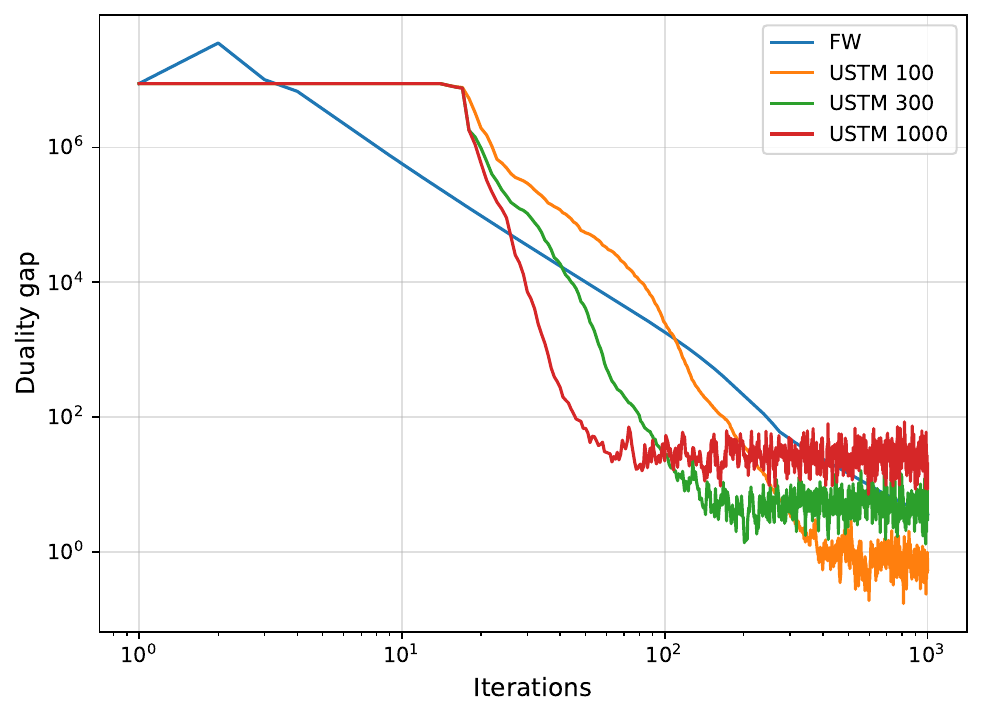} 
\caption{}
\label{fig:beck-ustm:dual-gap}
\end{subfigure}
\caption{Convergence of Frank--Wolfe and USTM with different $\varepsilon$ for Beckmann model for Berlin-Center network: a) primal function, b) duality gap}
\label{fig:beck-ustm}
\end{figure}

\clearpage

\section{Acknowledgements}
This work was supported by the federal academic leadership program "Priority-2030" under the agreement No. 2022/pr-203 dated 08/17/2022 between MIPT and RUT (MIIT) and by the Ministry of Science and Higher Education of the Russian Federation (Goszadaniye) 075-00337-20-03, project No. 0714-2020-0005.

The work of A.~Kroshnin was conducted within the framework of the HSE University Basic Research Program.





\bibliography{lib}

\end{document}